\setlist[description]{leftmargin=\parindent,labelindent=\parindent}
\setlist[description]{leftmargin=\parindent,labelindent=\parindent}
\newtheorem{theorem}{Theorem}[section]
\newtheorem{proposition}[theorem]{Proposition}
\newtheorem{lemma}[theorem]{Lemma}
\newtheorem{corollary}[theorem]{Corollary}
\theoremstyle{definition}
\newtheorem{definition}[theorem]{Definition}
\newtheorem{remark}[theorem]{Remark}
\newtheorem*{unnumberedremark}{Remark}
\newtheorem{fact}[theorem]{Fact}
\newenvironment{customthm}[1]
  {\innercustomthm}
  {\endinnercustomthm}
\def\Bbar{\overline{\mathcal{B}}}
\def\B{\mathcal{B}}
\def\A{\mathcal{A}}
\def\Mbar{\overline{\mathcal{M}}}
\def\M{\mathcal{M}}
\def\d{\partial}
\DeclareMathOperator{\Admbar}{\overline{Adm}}
\DeclareMathOperator{\rt}{rt}
\def\QQ{\mathbb{Q}}
\def\CC{\mathbb{C}}
\title[Holomorphic forms and non-tautological cycles]{Holomorphic forms and non-tautological cycles on moduli spaces of curves}
\author[Arena]{Veronica Arena}
\author[Canning]{Samir Canning}
\author[Clader]{Emily Clader}
\author[Haburcak]{Richard Haburcak}
\author[Li]{Amy Q. Li}
\author[Mok]{Siao Chi Mok}
\author[Tamborini]{Carolina Tamborini}
\begin{document}
	
	\maketitle

 \begin{abstract}
We prove, for infinitely many values of $g$ and $n$, the existence of non-tautological algebraic cohomology classes on the moduli space $\M_{g,n}$ of smooth, genus-$g$, $n$-pointed curves.  In particular, when $n=0$, our results show that there exist non-tautological algebraic cohomology classes on $\M_g$ for $g=12$ and all $g \geq 16$.  These results generalize the work of Graber--Pandharipande and van Zelm, who proved that the classes of particular loci of bielliptic curves are non-tautological and thereby exhibited the only previously-known non-tautological class on any $\M_g$: the bielliptic cycle on $\M_{12}$.  We extend their work by using the existence of holomorphic forms on certain moduli spaces $\Mbar_{g,n}$ to produce non-tautological classes with nontrivial restriction to the interior, via which we conclude that the classes of many new double-cover loci are non-tautological.
 \end{abstract}
	
\section*{Introduction}
While the full cohomology ring of the moduli space $\Mbar_{g,n}$ of stable curves is generally intractable to study, there is a distinguished subring called the tautological ring that admits an explicit generating set and yet is rich enough to contain most cohomology classes coming from natural algebraic cycles.  Specifically, the tautological rings $RH^*(\Mbar_{g,n}) \subseteq H^*(\Mbar_{g,n})$ are defined simultaneously for all $g$ and $n$ as the smallest system of $\QQ$-subalgebras that contains the fundamental classes $[\Mbar_{g,n}]$ and is closed under pushforward by the forgetful and gluing maps between the moduli spaces.  There is an analogous subring $RH^*(\M_{g,n}) \subseteq H^*(\M_{g,n})$, defined as the image of $RH^*(\Mbar_{g,n})$ under restriction.  A natural question, for both $\Mbar_{g,n}$ and $\M_{g,n}$, is to understand when the equality $RH^* = H^*$ occurs.

On $\Mbar_{0,n}$, all cohomology classes are tautological by work of \cite{Keel}, and on $\Mbar_{1,n}$, all even-degree cohomology classes are tautological by work of Petersen \cite{Petersengenus1}.  It follows that all algebraic cohomology classes on $\M_{0,n}$ and $\M_{1,n}$ are tautological. There are several more cases in the literature where, for small values of $g$ and $n$, all algebraic cohomology on $\M_{g,n}$ is shown to be tautological  \cite{PetersenTommasi,FaberI,FaberII,Izadi,PV,CL789,CLstable,CLPste}; see Figure \ref{bigfig} for a summary.

The first example of a non-tautological algebraic class was produced by Graber and Pandharipande \cite{GP}: the class on $\Mbar_{2,20}$ consisting of curves that admit a double cover of a genus-one curve where the marked points are pairwise switched by the covering involution.  Van Zelm \cite{vZ} significantly extended these results, showing that a similarly-defined bielliptic cycle is non-tautological in an infinite family of cases; in particular, his results show that when $g \geq 2$, there are only finitely many pairs $(g,n)$ for which it is possible that $RH^*(\Mbar_{g,n})=H^*(\Mbar_{g,n})$.  Lian \cite{Lian} generalized these results further, showing that loci of higher-degree covers of higher-genus curves also give rise to non-tautological classes. 

Producing non-tautological algebraic classes on the interior $\M_{g,n}$, on the other hand, has proved significantly harder.  Graber--Pandharipande's original cycle restricts to a non-tautological class on $\M_{2,20}$, but the bielliptic cycle considered by van Zelm was only shown to restrict to a non-tautological class on $\M_{g,2m}$ when $g \geq 2$ and $g+m = 12$.  In particular, prior to the present work, there were only eleven pairs $(g,n)$ for which the existence of a non-tautological algebraic cycle on $\M_{g,n}$ was known.

Our main result vastly extends the previous work on non-tautological classes on $\M_{g,n}$, exhibiting such classes in an infinite family of cases; see Figure~\ref{bigfig} for a visualization.

\begin{figure}
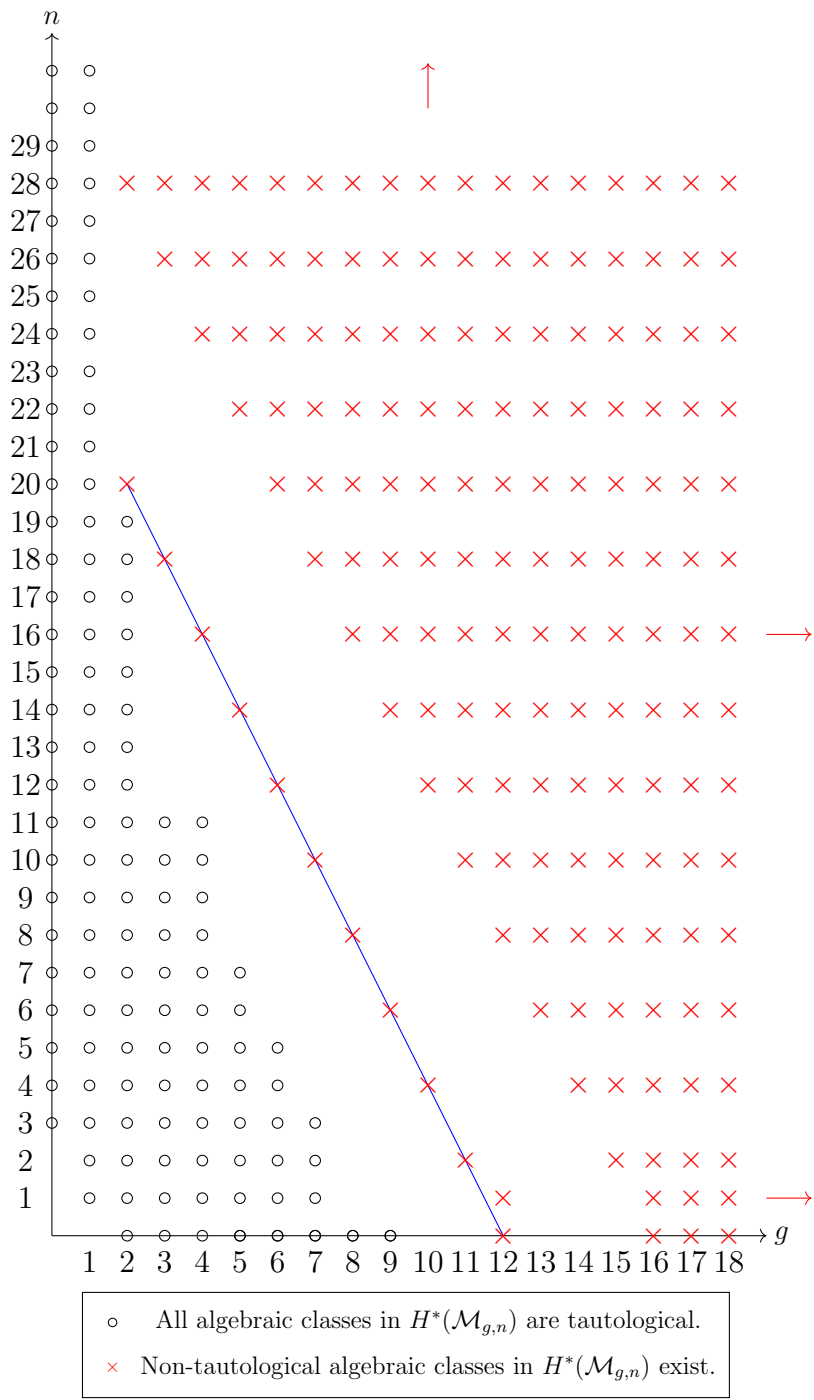
 
\vspace{-.2in}
\begin{center}

\end{center}
 
\caption{The pairs $(g,n)$ for which all algebraic classes on $\M_{g,n}$ are tautological (indicated by circles in the figure) all follow from previous work.  Aside from the eleven pairs on the blue line, all cases where non-tautological algebraic classes on $\M_{g,n}$ exist (indicated by $\color{red}{\times}$'s in the figure) are new to the current work.}
\label{bigfig}
\end{figure}

\begin{customthm}{A}\label{thm:main}
Let $g \geq 4$.  Then there exist non-tautological algebraic cycles in $H^*(\M_{g,2m})$ for any $m \geq 0$ such that either $g+m=12$ or $g+m \geq 16$.  If $g =2$ or $g=3$, the same result holds as long as $g+m$ is even.
\end{customthm}

 \begin{unnumberedremark}
     It was previously known that all algebraic cycles on $\M_g$ are tautological for $g \leq 9$ \cite{Mumford,FaberI, FaberII,Izadi,PV,CL789}; thus, Theorem~\ref{thm:main} settles the question of the existence of non-tautological classes on $\M_g$ for all but the cases $g=10,11,13,14,15$.  The case $g=12$ was the only case in which a non-tautological algebraic cycle on $\M_g$ was previously known \cite{vZ}, namely, the bielliptic cycle $[\B_{12}] \subseteq \M_{12}$.  The bielliptic cycle on $\M_{10}$ and $\M_{11}$, on the other hand, is known to be tautological \cite{CL789,CLbielliptic}.
 \end{unnumberedremark}

\begin{unnumberedremark}
Although Theorem~\ref{thm:main} only shows the existence of non-tautological classes on $\M_{g,n}$ when $n$ is even, it is straightforward to show (see Proposition~\ref{prop:rationaltails}) that, if one is willing to work with the larger moduli space $\M^{\text{rt}}_{g,n}$ of curves with rational tails, then Theorem~\ref{thm:main} can be used to produce such classes with odd numbers of marked points---in particular, on $\M^{\text{rt}}_{g,2m+1}$ for any $(g,m)$ satisfying the hypotheses of Theorem~\ref{thm:main}.  Since $\M^{\text{rt}}_{g,1} = \M_{g,1}$, this shows that there exist non-tautological algebraic classes in $H^*(\M_{g,1})$ whenever $g=12$ or $g \geq 16$, as indicated in Figure~\ref{bigfig}.
\end{unnumberedremark}

In order to prove Theorem~\ref{thm:main}, we begin by following a strategy analogous to that of \cite{GP} and \cite{vZ}: we consider the locus $\B_{g \rightarrow h, n, 2m} \subseteq \M_{g,n+2m}$ of smooth curves admitting a degree-two map to a smooth genus-$h$ curve such that the covering involution fixes the first $n$ marked points and pairwise swaps the last $2m$ marked points.  When $n=0$, there is a particular gluing map
\[i: \Mbar_{h,k} \times \Mbar_{h,k} \rightarrow \Mbar_{g,2m},\]
under which, for certain values of $g, h$, $k$, and $m$, one can prove that
\begin{equation}
    \label{eq:i*vZ}
i^*[\Bbar_{g \rightarrow h, 0, 2m}] = \alpha[\Delta] + B
\end{equation}
for $\Delta$ the diagonal, $\alpha$ a nonzero constant, and $B$ a cycle supported on the boundary.

In the special case when $h=1$ and $k=11$, Graber--Pandharipande and van Zelm used the above strategy to prove that $[\Bbar_{g \rightarrow 1, 0, 2m}]$ is non-tautological.  In that case, the key idea is, first, to use that $H^{11}(\Mbar_{1,11}) \neq 0$ to show that there exists a contribution to the K\"unneth decomposition of $[\Delta]$ that has odd degree and thus is necessarily non-tautological; and second, to use the known structure of $H^*(\Mbar_{1,11})$ to show that a cycle supported on the boundary must be tautological.  Combining these with \eqref{eq:i*vZ} shows that $i^*[\Bbar_{g \rightarrow 1, 0,2m}]$ is non-tautological, and since pullbacks of tautological classes along gluing maps are tautological, this proves that $[\Bbar_{g \rightarrow 1, 0,2m}]$ itself is non-tautological.

For other values of $h$ and $k$, on the other hand, the fact that $B$ is supported on the boundary is no longer sufficient to deduce that it is tautological.  Instead, a more subtle argument is needed: we use the existence, for certain values of $h$ and $k$, of holomorphic forms on $\Mbar_{h,k}$.  These are necessarily non-tautological and not pushed forward from the boundary, so they yield non-tautological contributions to $\alpha[\Delta]$ that cannot be cancelled by $B$.  The result of this reasoning is the following theorem.

\begin{customthm}{B}
    \label{thm:interior}
Let $g\geq 2h$ and set $k := g-2h+m+1$.  Suppose that
\[H^{3h-3+k,0}(\Mbar_{h,k})\neq 0.\]
If $m \geq 1$ or $g \geq 4h+2$, then
     \[[\B_{g\rightarrow h,0,2m}]\in H^{6h-6+2k}(\M_{g,2m})
    \]
    is non-tautological.
\end{customthm}

From here, by applying Theorem~\ref{thm:interior} to the special cases when $h=1$ and $h=2$, we deduce Theorem~\ref{thm:main}.

\begin{unnumberedremark}
Since tautological classes on $\Mbar_{g,2m}$ restrict to tautological classes on $\M_{g,2m}$, it follows from Theorem~\ref{thm:interior} that $[\Bbar_{g \rightarrow h, 0,2m}] \in H^{6h-6+2k}(\Mbar_{g,2m})$ is non-tautological under the same hypotheses.  When $h=1$ and $k \geq 11$ is odd and $k \neq 13$, these hypotheses are satisfied by well-known results on $\Mbar_{1,k}$, as we discuss in Section~\ref{sec:weightfiltration}, and we recover the key base case of \cite{vZ} and \cite{GP}.  When $h=2$ and $k \geq 14$ is even, we prove in Proposition~\ref{prop:g=2weights} that the hypotheses of Theorem~\ref{thm:interior} are also satisfied, yielding a new family of non-tautological classes on the compact moduli space of curves described in Corollary~\ref{cor:g=2}.  This generalizes results of \cite{Lian} to the unpointed case and gives an alternate proof in the pointed case.
\end{unnumberedremark}

\subsection*{Acknowledgements}
We thank Carel Faber, Gerard van der Geer, Siddarth Kannan, Aaron Landesman, Hannah Larson, Carl Lian, Rahul Pandharipande, Dan Petersen, and Johannes Schmitt for helpful conversations, and we thank the anonymous referee for their astute comments. This project started at the AGNES 2023 summer school at Brown University. We thank the organizers for creating the productive research environment that led to this paper and the NSF grant DMS-2312088 for funding the conference. V.A. was supported in part by funds from BSF grant 2018193 and NSF grant DMS-2100548. S.C. was supported by a Hermann-Weyl-Instructorship from the Forschungsinstitut f\"ur Mathematik at ETH Z\"urich.  E.C. was supported by NSF CAREER Grant 2137060. R.H. would like to thank the Hausdorff Research Institute for Mathematics funded by the Deutsche Forschungsgemeinschaft (DFG, German Research Foundation) under Germany's Excellence Strategy (EXC-2047/1 – 390685813) for their generous hospitality during part of the preparation of this work. A.L. was supported in part by NSF grants DMS–2053261 and DMS–2302475. S.C.M. was supported by the Cambridge Commonwealth, European and International Trust. C.T. was partially supported by the Dutch Research Council NWO project ``Cohomology of Moduli Space of Curves" (BM.000230.1), by the INdAM-GNSAGA project CUP
E55F22000270001, and by the DFG-Research Training Group 2553 ``Symmetries and classifying spaces: analytic, arithmetic, and derived."

\section{Preliminaries on double cover cycles}
\label{sec:preliminaries}
In this section, we introduce the double cover cycles of interest and prove the cases in which equation~\eqref{eq:i*vZ} holds.

 \subsection{Double cover cycles}
 \label{subsec addm double cover prelim} 

The loci of double covers we consider are the images of moduli stacks of admissible double covers under forgetful morphisms to the moduli space of curves. We use the definition of the moduli space of admissible covers from \cite{SvZ}, which is a finite cover of the the space of admissible covers defined in \cite{ACV}. 

\begin{definition}
\label{def:admcovers}
    Let $\Admbar(g,h)_{2m}$ be the stack of admissible double covers, which parameterizes tuples 
    \[
    (f:C \rightarrow D; x_1, \ldots, x_r; y_1, y_2, \dots, y_{2m-1}, y_{2m})
    \]
    such that
    \begin{itemize}
        \item $f:C\to D$ is a double cover of connected nodal curves of arithmetic genus $g$ and $h$, respectively;
        \item $x_1, \ldots, x_r \in C$ are precisely the smooth ramification points of $f$;
        \item $y_1,\dots, y_{2m}\in C$ are such that the involution induced by $f$ exchanges $y_{2i-1}$ and $y_{2i}$;
        \item the image under $f$ of each node of $C$ is a node of $D$;
        \item the pointed curves $\big(C; (x_i)_{i=1}^r, (y_i)_{i=1}^{2m}\big)$ and $\big(D; (f(x_i))_{i=1}^r, f(y_{2i-1})_{i=1}^m\big)$ are stable.
    \end{itemize}
\end{definition}

\begin{remark}
\label{rem:r}
    A Riemann--Hurwitz calculation shows that $r = 2g+2-4h$.
\end{remark}

For any $n \leq r$, there is a natural map
\begin{equation}
    \label{eq:phin}
\phi_n:\Admbar(g,h)_{2m}\to \Mbar_{g,n+2m}
\end{equation}
sending an admissible cover to the stabilization of $(C; x_1, \dots, x_n, y_1, \dots, y_{2m})$, and we define
\[\Bbar_{g \rightarrow h, n, 2m}\colonequals\phi_n\left(\Admbar(g,h)_{2m}\right).\]
The map $\Admbar(g,h)_{2m}\to \Mbar_{h,r+m}$ sending an admissible cover to the marked target curve $D$ is finite, so one has
\[\dim\left(\Bbar_{g\to h, n, 2m}\right) = \dim\left(\Mbar_{h,r+m}\right) = 2g+m-h-1.\]
Calculating the codimension thus gives
\[
\left[\Bbar_{g\to h, n, 2m}\right]\in H^{2(g+h+n+m-2)}\left(\Mbar_{g,2m+n}\right).
\]

	\subsection{Double covers and diagonals}
	\label{Sec:Lemma6}

 In this section, we restrict to the case $n=0$, and we set $k\colonequals g-2h+m+1$. Consider the gluing map
	\begin{equation}\label{eq:bargluing}
	i: \Mbar_{h,k}\times \Mbar_{h,k}\rightarrow \Mbar_{g,2m}
	\end{equation}
sending $\big( (C_1; y_1, \ldots, y_k), \; (C_2; z_1, \ldots, z_k) \big)$ to
\[ (C; y_{k-m+1}, z_{k-m+1}, y_{k-m+2}, z_{k-m+2}, \ldots, y_k, z_k),\]
where $C$ is the genus-$g$ curve obtained by pairwise gluing the first $k-m$ marked points of $C_1$ to the first $k-m$ marked points of $C_2$.  We denote by $j$ the composition
 \begin{gather}\label{eq:defj}
   j:\M_{h,k}\times \M_{h,k}\hookrightarrow \Mbar_{h,k}\times \Mbar_{h,k}\xrightarrow{i}\Mbar_{g,2m}.  
 \end{gather}
Generalizing \cite[Lemma 6]{vZ} (which is the case where $h=1$ and $g+m=12$), we prove the following.

\begin{proposition}\label{prop:g=hlemma6}
Let $g\geq 2h$.  If $m \geq 1$ or $g \geq 4h+2$, then 
\[j^*[\Bbar_{g \rightarrow h, 0, 2m}] = \alpha[\Delta]\]
for some $\alpha \in \QQ_{> 0}$, where $\Delta\subseteq \M_{h,k}\times \M_{h,k}$ denotes the diagonal.
	\end{proposition}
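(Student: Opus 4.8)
The plan is to identify the set-theoretic preimage $j^{-1}\big(\Bbar_{g\to h,0,2m}\big)$, show that it equals the diagonal $\Delta$, and then conclude from the facts that $\Delta$ realizes the expected codimension and that local intersection multiplicities are positive. Recall that $\Bbar_{g\to h,0,2m}=\phi_0\big(\Admbar(g,h)_{2m}\big)$, so a point $[C']\in\Mbar_{g,2m}$ lies in $\Bbar_{g\to h,0,2m}$ exactly when $C'$ is the stabilization (forgetting the ramification points, keeping the $2m$ swapped points) of the source curve of some admissible double cover; and for $\big((C_1;\mathbf y),(C_2;\mathbf z)\big)\in\M_{h,k}\times\M_{h,k}$ with $g\ge 2h$, the glued curve $C=i\big((C_1;\mathbf y),(C_2;\mathbf z)\big)$ has two smooth genus-$h$ components meeting in $k-m=g-2h+1\ge 1$ nodes and is already stable. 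The inclusion $\Delta\subseteq j^{-1}(\Bbar_{g\to h,0,2m})$ is obtained by an explicit construction: given $(C_1;\mathbf y)=(C_2;\mathbf z)$, take $D$ to be $C_1$ with a $\mathbb P^1$-tail attached at each of $y_1,\dots,y_{k-m}$, and take the source to be two copies of $C_1$ together with, over the $j$-th tail, a $\mathbb P^1$ mapping $2:1$ onto that tail (branched at two of its points and meeting each copy of $C_1$ at one point over the attaching node), with covering involution swapping the two copies of $C_1$ and flipping each bridge $\mathbb P^1$. A Riemann--Hurwitz count shows this uses exactly $r=2(k-m)=2g+2-4h$ ramification points (Remark~\ref{rem:r}) and has source of arithmetic genus $g$; contracting the bridges, which become unstable once their ramification points are forgotten, recovers $C$ with its marked points correctly labeled.

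For the reverse inclusion, suppose $C=i\big((C_1;\mathbf y),(C_2;\mathbf z)\big)$ lies in $\Bbar_{g\to h,0,2m}$ via an admissible double cover $f\colon\widetilde C\to D$ with covering involution $\sigma$. Since $\widetilde C$ stabilizes to the two-component curve $C$, the positive-genus part of $\widetilde C$ is $C_1\sqcup C_2$, and $\sigma$ either (A) interchanges $C_1$ and $C_2$ or (B) preserves each of them. In case (A) one has $f(C_1)=f(C_2)=:D_0$, and a birationality argument together with $p_a(D)=h$ shows that $f$ restricts to an isomorphism $C_i\xrightarrow{\sim}D_0$ onto a smooth genus-$h$ curve; a rational chain of $\widetilde C$ joining $C_1$ to $C_2$ must be $\sigma$-invariant (otherwise $\sigma$ sends it to a distinct chain with the same image in $D$, forcing a loop in $D$ and contradicting $p_a(D)=h$), and tracing the identifications shows that $\sigma$ induces an isomorphism $(C_1;\mathbf y)\xrightarrow{\sim}(C_2;\mathbf z)$ respecting all marked-point labels, so the point lies on $\Delta$. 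Case (B) is ruled out by the hypotheses: when $m\ge1$, the marked points $y_{k-m+i}\in C_1$ and $z_{k-m+i}\in C_2$ must be interchanged by $\sigma$, which is impossible if $\sigma(C_1)=C_1$; and when $m=0$, case (B) forces $D$ to contain distinct positive-genus components $E_i=f(C_i)$ joined by $\ell$ bridges with $2\ell\ge k-m=g-2h+1$, whereas $p_a(D)=h$ forces $\ell\le h+1$, a contradiction once $g\ge4h+2$. Hence $j^{-1}(\Bbar_{g\to h,0,2m})=\Delta$.

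By the dimension computations of Subsection~\ref{subsec addm double cover prelim} one has $\operatorname{codim}_{\Mbar_{g,2m}}\Bbar_{g\to h,0,2m}=g+h+m-2=\dim\M_{h,k}=\operatorname{codim}_{\M_{h,k}\times\M_{h,k}}\Delta$, so $j^{-1}(\Bbar_{g\to h,0,2m})=\Delta$ has the expected codimension; therefore $j^*[\Bbar_{g\to h,0,2m}]$ is a class supported on the irreducible cycle $\Delta$ in that codimension, hence equal to $\alpha[\Delta]$ for some $\alpha\in\QQ$, and $\alpha>0$ by positivity of local intersection multiplicities together with $j(\Delta)\subseteq\Bbar_{g\to h,0,2m}$. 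I expect the main obstacle to be the reverse inclusion: case (A) requires a careful analysis of how the rational components of $\widetilde C$ map into $D$ (to force the joining chains to be $\sigma$-invariant bridges over $\mathbb P^1$-tails of $D_0$, and hence to force the label-preserving isomorphism $C_1\cong C_2$), and the exclusion of case (B) when $m=0$ is exactly the place where the hypothesis $g\ge4h+2$ enters, through the bound on the number of bridges in $D$ imposed by $p_a(D)=h$.
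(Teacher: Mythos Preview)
Your outline is correct and matches the paper's approach closely: both arguments identify the fiber of $j$ over $\Bbar_{g\to h,0,2m}$ with the diagonal via a case analysis on whether the covering involution swaps or preserves the two genus-$h$ components, rule out the preserving case using the numerical hypothesis, and conclude by a dimension count. The paper packages this through the fiber product $F$ of $j$ and $\phi_0$ and the map $\zeta\colon\M_{h,k}\to F$, reducing the statement to surjectivity of $\zeta$ on closed points, but this is formally equivalent to your set-theoretic preimage computation.

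Two points where the paper's execution is sharper than your sketch. First, in both cases the paper begins by showing that the source curve $S$ has \emph{no} contracted rational components: any such component must carry marked ramification points (by stability in $\Admbar$), so its two nodes are unramified and hence swapped by $\sigma$, which in case~(B) forces $\tau$ to swap the two branches of the corresponding node of $C$---impossible if $\tau$ preserves each $C_i$. Once you know $S$ has exactly the $k-m$ nodes $\hat Q_1,\dots,\hat Q_{k-m}$, the counting in case~(B) becomes clean: $T$ has at least $(k-m)/2$ nodes (as $\sigma$-orbits), each of them non-separating (since removing one or two of the $\hat Q_i$ from $S$ leaves it connected), and $T$ has two irreducible components; hence $p_a(T)\ge (k-m)/2-1>h$ once $g\ge 4h+2$. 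Your version, phrased in terms of ``$\ell$ bridges'' joining $E_1=f(C_1)$ and $E_2=f(C_2)$, reaches the same inequality but leaves the edge-disjointness of those bridges and the resulting Betti-number bound implicit; eliminating the rational bridges first avoids this. Second, your assertion that the $E_i$ are \emph{positive-genus} is unnecessary (and can fail, e.g.\ if $C_i$ is hyperelliptic); what matters is only that $E_1\ne E_2$, which follows since each $f|_{C_i}$ has degree~$2$ in case~(B) and $f$ has total degree~$2$.
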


As in the proof of \cite[Lemma 6]{vZ}, we begin by reinterpreting the statement via the diagram
	\begin{equation}\label{commutative diagram: gluing maps}
		\begin{tikzcd}
			{\M_{h,k}} \arrow[d, "\delta"'] \arrow[r, "\eta"] & {\Admbar(g,h)_{2m}} \arrow[d, "\phi_0"] \\
			{\M_{h,k}\times \M_{h,k}} \arrow[r, "j"']        & {\Mbar_{g,2m},}                         
		\end{tikzcd}
	\end{equation}
 which is illustrated in Figure~\ref{fig:diagonal_bielliptic_picture}.  Here, $\delta: \M_{h,k} \rightarrow \M_{h,k} \times \M_{h,k}$ is the embedding of the diagonal, $\phi_0$ and $j$ are as in \eqref{eq:phin} and \eqref{eq:defj}, and the map $\eta:\M_{h,k}\rightarrow \Admbar(g,h)_{2m}$ sends a pointed curve $(C_1;x_1,\dots,x_{k})$ to the admissible cover whose source curve $S$ consists of two copies of $C_1$ glued together by rational bridges attached to the first $k-m$ marked points of each copy, and whose target curve $T$ is a single copy of $C_1$ with a rational tail attached to each of the first $k-m$ marked points, where the two copies of $C_1$ in the source curve are swapped by the covering involution.  (The reason for including the rational bridges and rational tails is so that the covering sends nodes in the source curve to nodes in the target curve, as required by Definition~\ref{def:admcovers}.)

\begin{figure}[t]
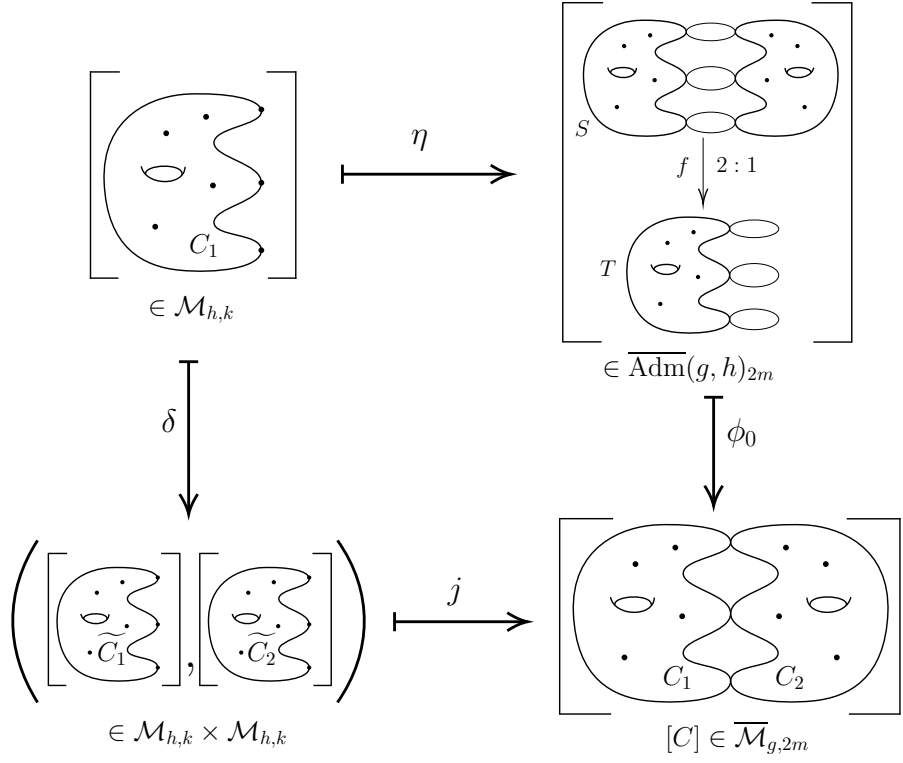

\resizebox{0.75\textwidth}{!}{

}
\caption{An illustration of the commutative diagram \eqref{commutative diagram: gluing maps}.}
\label{fig:diagonal_bielliptic_picture}
\end{figure}

This diagram induces a map $\zeta:\M_{h,k}\rightarrow F$ to the fiber product $F$ of the morphisms $j$ and $\phi_0$,
	\begin{equation*}
		\begin{tikzcd}
                {\M_{h,k}}
                \arrow[bend left]{drr}{\eta}
                \arrow[bend right,swap]{ddr}{\delta}
                \arrow{dr}{\zeta} & & \\
                & F \arrow{r} \arrow{d}{\gamma}
                & {\Admbar(g,h)_{2m}} \arrow{d}{\phi_0} \\
                & {\M_{h,k} \times \M_{h,k}} \arrow[swap]{r}{j}
                & {\Mbar_{g,2m},} 
            \end{tikzcd}
	\end{equation*}
 and via this map, we can reinterpret the conclusion of Proposition~\ref{prop:g=hlemma6} as follows.

\begin{lemma}\label{lem: zeta surjective implies pullback is multiple of diagonal}
If $\zeta:\M_{h,k}\rightarrow F$ is surjective on closed points, then
\[
j^*[\Bbar_{g\rightarrow h,0,2m}]=\alpha[\Delta]
\]
for some $\alpha\in\QQ_{>0}$.
\end{lemma}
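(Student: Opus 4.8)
The plan is to compute $j^*[\Bbar_{g\rightarrow h,0,2m}]$ by base change along the closed immersion of $\Bbar_{g\rightarrow h,0,2m}$ into $\Mbar_{g,2m}$, reducing the statement to the cycle theory of the associated fiber product. Set $F' := (\M_{h,k}\times\M_{h,k})\times_{\Mbar_{g,2m}}\Bbar_{g\rightarrow h,0,2m}$, with projection $q\colon F'\to\M_{h,k}\times\M_{h,k}$. Since $\Mbar_{g,2m}$ is a smooth Deligne--Mumford stack, the morphism $j$ is l.c.i.\ (its graph is a regular embedding into a smooth stack, or equivalently $j$ factors through the gluing map $i$), so it admits a refined Gysin homomorphism, and
\[
j^*[\Bbar_{g\rightarrow h,0,2m}] \;=\; q_*\bigl(j^![\Bbar_{g\rightarrow h,0,2m}]\bigr), \qquad j^![\Bbar_{g\rightarrow h,0,2m}]\in A_{e}(F'),
\]
with $e := \dim\Bbar_{g\rightarrow h,0,2m} - \bigl(\dim\Mbar_{g,2m}-\dim(\M_{h,k}\times\M_{h,k})\bigr)$. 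A dimension count using $\dim\Bbar_{g\rightarrow h,0,2m} = \dim\Mbar_{h,r+m} = 2g-h+m-1$, $\dim\Mbar_{g,2m}=3g-3+2m$, and $k = g-2h+m+1$ gives $e = g+h+m-2 = \dim\M_{h,k}=\dim\Delta$, which is also the expected dimension of $F'$.

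To make the right-hand side computable I would pin down $F'$. Every component of a fiber product over the smooth base $\Mbar_{g,2m}$ has dimension at least the expected dimension, so $\dim F'\ge g+h+m-2$. In the other direction, the natural map $F := (\M_{h,k}\times\M_{h,k})\times_{\Mbar_{g,2m}}\Admbar(g,h)_{2m}\to F'$ is a base change of the surjection $\phi_0\colon\Admbar(g,h)_{2m}\twoheadrightarrow\Bbar_{g\rightarrow h,0,2m}$ and is therefore surjective on closed points; composing with $\zeta\colon\M_{h,k}\to F$, which is surjective on closed points by hypothesis, shows that every closed point of $F'$ lies in the image of $\M_{h,k}$, whence $\dim F'\le\dim\M_{h,k}=g+h+m-2$. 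Thus $F'$ is pure of the expected dimension, the intersection defining it is proper, and consequently $j^![\Bbar_{g\rightarrow h,0,2m}]$ is represented by an \emph{effective} cycle $\sum_i m_i[F_i']$ in which every irreducible component $F_i'$ of $F'$ occurs with multiplicity $m_i\ge 1$. Moreover, commutativity of the diagram defining $\zeta$ gives that $q$, precomposed with $\M_{h,k}\to F\to F'$, equals the diagonal embedding $\delta$; hence $\Delta=\delta(\M_{h,k})\subseteq q(F')\subseteq\Delta$, so $q(F')=\Delta$. Since $\Delta$ is irreducible and $q$ is proper, at least one component satisfies $q(F_{i_0}')=\Delta$, and for it $q$ restricts to a generically finite map (irreducible source and target of equal dimension), so $q_*[F_{i_0}']=\deg(F_{i_0}'/\Delta)\,[\Delta]$ with positive degree, while $q_*[F_i']$ is a nonnegative multiple of $[\Delta]$ for the remaining components. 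Altogether, using $A_{\dim\Delta}(\Delta)=\QQ\cdot[\Delta]$ since $\Delta\cong\M_{h,k}$ is irreducible,
\[
j^*[\Bbar_{g\rightarrow h,0,2m}]=q_*\bigl(j^![\Bbar_{g\rightarrow h,0,2m}]\bigr)=\alpha[\Delta],\qquad \alpha=\!\!\sum_{i\,:\,q(F_i')=\Delta}\!\! m_i\deg(F_i'/\Delta)\;>\;0.
\]

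I expect the technical heart of the argument to be the homological input: the compatibility of the refined Gysin pullback $j^!$ with proper pushforward for Deligne--Mumford stacks with $\QQ$-coefficients, and the fact that a \emph{proper} l.c.i.\ intersection yields an effective refined class carrying strictly positive multiplicities on every component. It is precisely this effectivity that upgrades the set-theoretic hypothesis ``$\zeta$ surjective'' to the positivity of $\alpha$; one checks it by factoring $j$ as a regular embedding followed by a smooth morphism, using that smooth (hence flat) pullback preserves effectivity and that the local multiplicities of a regular embedding against a subvariety are positive once the excess vanishes. The only other ingredients are the dimension bookkeeping above, which uses $k=g-2h+m+1$ and $g\ge2h$, and the surjectivity of $\zeta$ on closed points --- the latter being where the hypotheses $m\ge1$ or $g\ge4h+2$ enter, which we establish separately.
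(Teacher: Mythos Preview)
Your proof is correct and follows essentially the same strategy as the paper: express $j^*[\Bbar_{g\to h,0,2m}]$ as the pushforward of a cycle on a fiber product over $\Mbar_{g,2m}$, use surjectivity of $\zeta$ to bound that fiber product's dimension by $\dim\M_{h,k}$, and conclude that the resulting top-dimensional cycle pushes forward to a positive multiple of $[\Delta]$. The paper works directly with $F$ (the fiber product with $\Admbar(g,h)_{2m}$) rather than your intermediate $F'$, and is terse about why the coefficient is strictly positive; your explicit use of the refined Gysin map and the effectivity of a dimensionally proper intersection makes that positivity step more transparent.
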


\begin{proof}
By definition, we have \[j^*[\Bbar_{g\rightarrow h,0,2m}]=j^*\phi_{0*}[\Admbar(g,h)_{2m}]\in H^{2(g+h+m-2)}(\M_{h,k}\times\M_{h,k}),\] 
where the latter cohomology group is equivalent to
\[H^{2(3h-3+k)}(\M_{h,k}\times\M_{h,k}) = H_{2(3h-3+k)}(\M_{h,k}\times\M_{h,k}).\]
Therefore, since $F$ is a fiber product, the cycle class $j^*[\Bbar_{g\rightarrow h,0,2m}]$ is the pushforward via $\gamma$ of a cycle $Z \in H_{2(3h-3+k)}(F)$. If $\zeta$ is surjective on closed points, then $\dim(F) \leq \dim(\M_{h,k}) = 3h - 3 + k$, so a cycle $Z$ of homological degree $2(3h-3+k)$ can only exist if $\text{dim}(F) = 3h-3+k$ and $Z = c[F]$ for some $c \in \QQ_{>0}$.  This, in turn, means that $Z = \zeta_*(\alpha[\M_{h,k}])$ for some $\alpha \in \QQ_{>0}$, so
\[j^*[\Bbar_{g\rightarrow h,0,2m}] = \gamma_*(Z) = \gamma_*\zeta_*(\alpha[\M_{h,k}]) = \delta_*(\alpha[\M_{h,k}])=\alpha[\Delta]\]
for some $\alpha\in \QQ_{>0}$, as claimed.
\end{proof}

In order to prove that $\zeta$ is surjective on closed points under the hypotheses of Proposition~\ref{prop:g=hlemma6}, we first set some notation, summarized in Figure~\ref{fig:diagonal_bielliptic_picture}. A closed point $P$ of the fiber product $F$ is the data of a curve $\tilde{C}\colonequals (\tilde{C}_1,\tilde{C}_2)$ in $\M_{h,k}\times \M_{h,k}$, an admissible cover $(f:S\rightarrow T)\in \Admbar(g,h)_{2m}$, and an isomorphism
\[
\varphi: j(\tilde{C})\xrightarrow{\sim} \phi_0(S\rightarrow T).
\]
Let $\tilde{j}:\tilde{C_1}\coprod \tilde{C_2}\rightarrow j(\tilde{C})$ be the map induced by $j$. Set $C\colonequals j(\tilde{C})$, $C_1=\tilde{j}(\tilde{C_1})$, and $C_2=\tilde{j}(\tilde{C_2})$.  The involution on $S\rightarrow T$ induces an involution on the stabilization of $S$, and hence on $C$, which we denote by $\tau$. Let $Q_i$ be the node of $C$ corresponding under $j$ to the $i$th marked points of $\tilde{C}_1$ and $\tilde{C}_2$. 

Since $C_1$ and $C_2$ are smooth, either $\tau(C_1) = C_2$ or $\tau$ maps each component to itself. We first consider the case where $\tau(C_1) = C_2$, which is essentially identical to the corresponding argument in \cite{vZ}.

\begin{lemma}\label{lem: tau switching components implies zeta surjective} Let $P  = (\tilde{C}, f: S \rightarrow T, \varphi)$ be a closed point of $F$.  If $\tau(C_1)=C_2$, then $P$ is in the image of $\zeta$.
\end{lemma}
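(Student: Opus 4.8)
The goal is to exhibit a pointed curve $C_0 \in \M_{h,k}$ with $\zeta(C_0) = P$; the candidate is $C_0 := \tilde C_1$, carrying the marked points transported from $C = j(\tilde C)$ along $\tilde j$ and the isomorphism $\gamma\colon j(\tilde C)\xrightarrow{\sim}\phi_0(S\to T)$. The first step is to check that $\tau$ restricts to a label-preserving isomorphism $\sigma := \tau|_{C_1}\colon C_1 \xrightarrow{\sim} C_2$, and hence --- since $C$ is already stable, so that $\tilde j$ identifies $\tilde C_i$ with the component $C_i$ --- an isomorphism $\tilde C_1 \cong \tilde C_2$ in $\M_{h,k}$, i.e.\ $(\tilde C_1,\tilde C_2) = \delta(C_0)$. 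Because $\tau$ is induced by the covering involution $\iota$ of $S\to T$, it exchanges the $2m$ marked points of $C$ in the pairs $\{y_{2j-1}, y_{2j}\}$, which by the indexing in the definition of the gluing map $i$ are precisely the $j$-th ``free'' marked points of $\tilde C_1$ and $\tilde C_2$; thus $\sigma$ respects the labels of the free points. For the first $k-m$ (glued) marked points I would examine the preimage in $S$ of each node $Q_i$: admissibility forces every node of $S$ to map to a node of $T$, which excludes the possibility that $Q_i$ lifts to a single node of $S$ whose branches are swapped by $\iota$; hence $Q_i$ is obtained by contracting a chain of rational curves joining the two genus-$h$ parts $\hat C_1, \hat C_2$ of $S$, which are exchanged by $\iota$. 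Swapping two distinct such chains would join two distinct points of the image of $\hat C_1$ in $T$ by a rational chain and so raise the genus of $T$ above $h$, a contradiction; therefore $\iota$ fixes each chain, $\tau$ fixes each $Q_i$, and $\sigma$ matches the $i$-th glued point of $\tilde C_1$ with the $i$-th glued point of $\tilde C_2$.

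It then remains to identify $(f\colon S\to T)$ with $\eta(C_0)$, compatibly with $\gamma$. The stabilized source $\phi_0(S\to T)\cong C$ consists of the two smooth genus-$h$ components $C_1 \cong C_2 \cong C_0$ meeting at $Q_1,\dots,Q_{k-m}$, with $\iota$ exchanging them. Arguing as in \cite[Lemma 6]{vZ}, I would combine the admissibility conditions --- nodes map to nodes, and the $x_i$ are exactly the smooth ramification points --- with the stability of $(S; x_\bullet, y_\bullet)$ and of its quotient to conclude that the only such admissible cover is the one built by $\eta$: two copies of $C_0$ joined by a rational bridge bearing two ramification points over each $Q_i$, mapping to a single copy of $C_0$ with a rational tail over each of its first $k-m$ marked points, with the points $y_\bullet$ distributed on the two copies as in the definition of $\eta$. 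Keeping track of $\gamma$ throughout then gives $\zeta(C_0) = P$.

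The main obstacle is this last step: reconstructing the admissible cover --- including all of its rational ``padding'' and the location of the ramification --- from just its stabilized source and the covering involution, and ruling out every alternative configuration that stabilizes to the same $C$. This is exactly the case analysis carried out in \cite[Lemma 6]{vZ} for $h=1$, and since it uses only the smoothness of $C_1$ and $C_2$ rather than anything special about their genus, it carries over essentially verbatim to the present setting.
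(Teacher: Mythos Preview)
Your case analysis for showing that $\tau$ fixes each node $Q_i$ has a gap. You exclude the case where $Q_i$ lifts to a single $\iota$-fixed node of $S$ (correctly: its image in $T$ would then be smooth, violating admissibility), and then conclude ``hence $Q_i$ is obtained by contracting a chain of rational curves''. But you have not excluded the case where $Q_i$ lifts to a single node $\hat Q_i$ with $\iota(\hat Q_i)=\hat Q_j$ for some $j\ne i$; this is precisely $\tau(Q_i)=Q_j$, the situation you are trying to rule out. In fact your subsequent chain-swapping scenario is vacuous: any contracted rational component of $S$ must carry a marked ramification point by stability of $(S;x_\bullet,y_\bullet)$, hence is $\iota$-fixed, so chains are automatically $\iota$-invariant and cannot be exchanged. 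The paper's proof handles exactly the missing case: if $\tau(Q_i)=Q_j$ with $i\ne j$, the stability observation above forces the preimages to be single nodes $\hat Q_i,\hat Q_j$, and then $f(\hat Q_i)=f(\hat Q_j)$ is a self-node of the component $T_1=f(\hat C_1)=f(\hat C_2)$ of $T$, so $T_1$ has geometric genus at most $h-1$, contradicting that its normalization $\hat C_1\cong C_1$ has genus $h$. Your genus idea is the right one --- it is simply applied to the wrong configuration.

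On the other hand, you are right to flag the second step (identifying the admissible cover with $\eta(C_0)$ compatibly with $\gamma$), which the paper's proof passes over with its opening assertion that it suffices to show $C_1\cong C_2$ as pointed curves. Your appeal to the reconstruction in \cite[Lemma~6]{vZ} is appropriate and, as you note, uses nothing special about $h=1$.
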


\begin{proof}
In order to show that $P$ is in the image of $\zeta$, we must show that $\tau$ induces an isomorphism between $\tilde{C_1}$ and $\tilde{C_2}$ as pointed curves.  The fact that $\tau(C_1) = C_2$ already implies that $\tilde{C_1} \cong \tilde{C_2}$, so what remains is to show that $\tau$ fixes all nodes so that this isomorphism respects the marked points.

Suppose, toward a contradiction, that $Q_i \neq Q_j$ are nodes of $C$ such that $\tau(Q_i) = Q_j$.  Note that the preimages of these nodes in $S$ must be nodes and not contracted rational components, since a contracted rational component must be sent to itself by the covering involution on $S$. 
Let $S_1$ and $S_2$ be, respectively, the preimages of $C_1$ and $C_2$ in $S$.
Denoting the preimages in $S$ of $Q_i, Q_j$ by $\hat{Q_i}, \hat{Q_j}$, the fact that $\tau(Q_i) = Q_j$ implies that $f(\hat{Q}_i) = f(\hat{Q}_j) = N$ for some node $N$ of $T$, which is necessarily a non-separating node. However, the fact that $\tau(C_1)=C_2$ means that the covering involution on $S$ swaps the corresponding components $S_1$ and $S_2$, and $f$ maps $S_1$ and $S_2$ birationally to the component $T_1$ of $T$ containing $N$.  Since $N$ is a non-separating node, $T_1$ has geometric genus at most $h-1$, contradicting the fact that it is birational to the curves $S_1$ and $S_2$ of geometric genus $h$.
\end{proof}

In light of Lemmas~\ref{lem: zeta surjective implies pullback is multiple of diagonal} and \ref{lem: tau switching components implies zeta surjective}, to finish the proof of Proposition \ref{prop:g=hlemma6}, it suffices to show that $\tau$ cannot map each component to itself. Here, the argument diverges from \cite{vZ}, and the numerical conditions in Proposition \ref{prop:g=hlemma6} are used.

\begin{lemma}\label{lem: tau cannot fix components}
    Let $g\geq 2h$.  If $m \geq 1$ or $g \geq 4h+2$, then $\tau$ cannot fix $C_1$ and $C_2$.
\end{lemma}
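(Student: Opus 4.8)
The plan is to argue by contradiction: suppose $\tau$ fixes each of $C_1$ and $C_2$, and derive a contradiction in the two cases $m \geq 1$ and ($m = 0$ with $g \geq 4h+2$) separately. Recall that $C = j(\tilde{C})$ is identified via $\gamma$ with the stabilization of the source curve $S$ of an admissible double cover $f \colon S \to T$ with $p_a(S) = g$ and $p_a(T) = h$, and that $\tau$ is the involution of $C$ induced by the covering involution $\iota$ of $S$. Note that $C = C_1 \cup C_2$ has exactly $N \colonequals g - 2h + 1$ nodes, all joining the two smooth genus-$h$ components $C_1$ and $C_2$, which are distinct since $g \geq 2h$ forces $N \geq 1$.

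\emph{Case $m \geq 1$.} Here I would use the marked points directly. By the definition of the gluing map $j$, the first of the $2m$ marked points of $C$ lies on the component $C_1$ and the second lies on $C_2$, so these are distinct smooth points on different components. But the covering involution of an admissible cover exchanges the first and second marked points (Definition~\ref{def:admcovers}), so $\tau$ sends a point of $C_1 \setminus C_2$ to a point of $C_2 \setminus C_1$, contradicting $\tau(C_1) = C_1$. This case uses neither $g \geq 4h+2$ nor the genus count below.

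\emph{Case $m = 0$ and $g \geq 4h+2$.} Now there are no marked points to exploit, so instead I would run a genus count on the quotient $\bar{C} \colonequals C/\tau$. Put $\bar{C}_i \colonequals C_i/\tau$; since we work in characteristic zero and $C_i$ is smooth of genus $h$, each $\bar{C}_i$ is smooth, of some genus $h_i \geq 0$. Because $\tau$ preserves each $C_i$, it cannot interchange the two branches of a node of $C$ (one branch lies on $C_1$, the other on $C_2$), so every node of $C$ descends to a node of $\bar{C}$. Letting $\sigma$ be the involution by which $\tau$ permutes the $N$ nodes of $C$, with $f$ fixed points and $a$ transpositions (so $f + 2a = N$), we get $\bar{C} = \bar{C}_1 \cup \bar{C}_2$ with exactly $a+f$ nodes, whence $p_a(\bar{C}) = h_1 + h_2 + a + f - 1$. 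The essential input is that $p_a(\bar{C}) = h$: the quotient $\bar{C}$ is the stabilization of the target curve $T = S/\iota$, which has arithmetic genus $h$ by Definition~\ref{def:admcovers}, and passing from $T$ to $\bar{C}$ only contracts rational bridges and tails, leaving the arithmetic genus unchanged. Granting this, solving $a + f = h + 1 - h_1 - h_2$ together with $f + 2a = g - 2h + 1$ yields
\[ f \;=\; 4h + 1 - 2(h_1 + h_2) - g \;\leq\; 4h + 1 - g \;<\; 0, \]
using $g \geq 4h+2$; this is absurd since $f \geq 0$, and the contradiction proves the lemma.

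The step I expect to require the most care is the identification $p_a(\bar{C}) = h$, that is, the compatibility of ``quotient by $\tau$'' with ``stabilize.'' The point is that, under the standing assumption $\iota(S_1) = S_1$ and $\iota(S_2) = S_2$, every rational chain of $S$ contracted in passing to $C$ is carried by $\iota$ to a rational chain \emph{without} reversing its two endpoints---reversing them would interchange $S_1$ and $S_2$---so its image in $T$ is again such a chain, contracted compatibly on both sides. Once this is settled, the arithmetic-genus comparison is routine; the numerical hypotheses enter only through the inequality $f \geq 0$, and $g \geq 2h$ is used merely to ensure that the glued curve $C$ is connected.
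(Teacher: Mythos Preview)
Your proof is correct and follows the same overall strategy as the paper: the $m \geq 1$ case uses the conjugate marked points, and the $m = 0$ case is an arithmetic-genus count on the quotient curve. The execution of the $m = 0$ case differs slightly. The paper first shows directly that $S$ has no contracted rational components---a rational bridge $R$ with $\iota(R) = R$ has its two ramification points among the marked $x_i$, not at its nodes, so $\iota$ swaps the two nodes of $R$ and hence $\tau$ swaps the branches of the corresponding node of $C$, contradicting $\tau(C_i)=C_i$---and then concludes $S = C$ and $T = C/\tau$, so that $p_a(C/\tau) = h$ is immediate. You instead argue that $C/\tau$ is the stabilization of $T$; your observation that a contracted rational chain cannot have its endpoints reversed by $\iota$ is essentially the paper's argument applied component-by-component along the chain, so the content is the same, just packaged less directly. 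On the numerics, your exact solution $f = 4h + 1 - 2(h_1 + h_2) - g < 0$ is a bit sharper than the paper's lower bound $p_a(T) \geq k/2 - 1$, but both yield the same contradiction under $g \geq 4h+2$.
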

\begin{proof}
First, note that, due to the ordering of marked points in the definition of $j$ in \eqref{eq:defj}, the map $\tau$ must send marked points on $C_1$ to marked points on $C_2$.  Thus, $C_1$ and $C_2$ cannot be fixed by $\tau$ if $m \geq 1$.

Now, assume that $m=0$, so $g \geq 4h + 2$ by assumption. Suppose, toward a contradiction, that $\tau$ fixes both $C_1$ and $C_2$.  As in Lemma~\ref{lem: tau switching components implies zeta surjective}, we first note that the preimage in $S$ of each node $Q_i \in C$ must be a node $\hat{Q}_i \in S$ and not a contracted rational component.  To see this, note that a contracted rational component must have two marked ramification points that are not nodes (by the definition of admissible covers), so the nodes are not ramification points and are therefore swapped by the covering involution; this implies that the involution $\tau$ swaps the branches of $Q_i$, contradicting that $\tau$ fixes $C_1$ and $C_2$. 

In light of the above, the set of nodes of $S$ is $\{\hat{Q}_1, \ldots, \hat{Q}_k\}$, and each of these is sent by $f$ to a node by the definition of admissible covers.  Since $\tau$ induces an involution on the nodes of $S$, this implies that there are at least $\frac{k}{2}$ nodes in $T$.  Furthermore, each node of $T$ is non-separating; to see this, note that the complement of any node in $T$ is the image under $f$ of the complement of either one or two nodes in $S$.  Since the $k$ nodes of $S$ are all non-separating, and $k >2$ by the condition $g \geq 4h+2$, it follows that these complements are both connected.  We have therefore shown that $T$ has at least $\frac{k}{2}$ non-separating nodes.  Furthermore, $T$ has two irreducible components; this follows from the fact that $S$ has two irreducible components (since $C$ has two irreducible components and we have already seen that no component of $S$ is contracted in $C$) and both of these components are fixed by the covering involution. Therefore, the arithmetic genus of $T$ is at least
\[\frac{k}{2} -1 = \frac{g}{2} - h - \frac{1}{2}>h,\]
contradicting the fact that the arithmetic genus of $T$ is $h$.
\end{proof}

We can now give a quick proof of Proposition \ref{prop:g=hlemma6}.

\begin{proof}[Proof of Proposition \ref{prop:g=hlemma6}]
By Lemma \ref{lem: tau cannot fix components}, we must have $\tau(C_1)=C_2$, and therefore the map $\zeta:\M_{h,k}\rightarrow F$ is surjective on closed points by Lemma \ref{lem: tau switching components implies zeta surjective}. The result now follows from Lemma \ref{lem: zeta surjective implies pullback is multiple of diagonal}.
\end{proof}

\section{Holomorphic forms and pure weight cohomology}
\label{sec:weightfiltration}

In this section, we review the relevant background on holomorphic forms, and we prove, in certain genus-one and genus-two cases, existence results for holomorphic forms on $\Mbar_{g,n}$.  The key reason that such forms are of interest for us is the following.

\begin{fact}\label{fact:2}
Any nonzero element of $H^{k,0}(\Mbar_{g,n})$ or $ H^{0,k}(\Mbar_{g,n})$ has nonzero image under the restriction map $H^k(\Mbar_{g,n}) \rightarrow H^k(\M_{g,n})$. 
\end{fact}

To see this, one uses the long exact sequence in cohomology, together with the Thom isomorphism, to yield an exact sequence
\begin{equation}\label{eq:LES}
H^{k-2}(\widetilde{\d \M_{g,n}})\rightarrow H^k(\Mbar_{g,n})\rightarrow H^k(\M_{g,n}),
\end{equation}
where $\widetilde{\d \M_{g,n}}$ is the normalization of the boundary, which is a disjoint union of smooth and proper Deligne--Mumford stacks.  The first map in this exact sequence is a direct sum of morphisms of pure Hodge structures of type $(1,1)$, so after complexification, its image intersects $H^{k,0}(\Mbar_{g,n})$ and $H^{0,k}(\Mbar_{g,n})$ trivially.  Therefore, exactness implies that any nonzero element of $H^{k,0}(\Mbar_{g,n})$ or $ H^{0,k}(\Mbar_{g,n})$
maps to a nonzero element of $H^k(\M_{g,n})\otimes \CC$, which proves Fact~\ref{fact:2}.

Recalling the statement of Theorem~\ref{thm:interior} from the introduction, we will be particularly interested in the existence of holomorphic $(3h-3+k)$-forms on $\Mbar_{h,k}$.  When $h=1$, these are already known to exist when $k$ is odd and sufficiently large.

 \begin{proposition}
 \label{prop:g=1weights}
 Let $k\geq 11$ be odd and $k\neq 13$. Then $H^{k,0}(\Mbar_{1,k})\neq 0$.
 \end{proposition}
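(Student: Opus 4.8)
The plan is to reduce the statement to the classical nonvanishing of the space $S_{k+1}$ of weight-$(k+1)$ cusp forms for $\mathrm{SL}_2(\mathbb Z)$: this space is nonzero exactly when $k+1$ is even and at least $12$ and $k+1\neq 14$, i.e.\ exactly when $k$ is odd, $k\geq 11$, and $k\neq 13$. The geometric input is the forgetful morphism $\pi\colon\overline{\mathcal M}_{1,k}\to\overline{\mathcal M}_{1,1}$ remembering only the first marked point, together with the Eichler--Shimura isomorphism relating cusp forms to the cohomology of symmetric-power local systems on the modular curve.

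First I would analyze the Leray spectral sequence $E_2^{p,q}=H^p(\overline{\mathcal M}_{1,1},R^q\pi_*\mathbb Q)\Rightarrow H^{p+q}(\overline{\mathcal M}_{1,k})$. Over the interior the fiber of $\pi$ over $[E]\in\mathcal M_{1,1}$ is a Fulton--MacPherson-type compactification of the configuration space of $k-1$ points on $E$, obtained from $E^{k-1}$ by blowing up iterated diagonals. Writing $\mathbb V=R^1p_*\mathbb Q$ for the weight-one variation attached to the universal elliptic curve $p\colon\mathcal E\to\mathcal M_{1,1}$, the summand $H^1(E)^{\otimes(k-1)}$ of $H^{k-1}(E^{k-1})$ shows that the $\mathrm{SL}_2$-irreducible $\mathrm{Sym}^{k-1}\mathbb V$ occurs in $R^{k-1}\pi_*\mathbb Q$ with multiplicity at least one, and a dimension count (the fiber has dimension $k-1$, the blow-ups contribute only strictly lower symmetric powers in their exceptional loci, and Poincar\'e duality on the fiber disposes of degrees above $k-1$) shows this is the only occurrence of $\mathrm{Sym}^{k-1}$ among all of the $R^q\pi_*\mathbb Q$. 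Since $-I\in\mathrm{SL}_2(\mathbb Z)$ acts on $\mathrm{Sym}^{k-1}\mathbb V$ by $(-1)^{k-1}$, this is vacuous unless $k$ is odd, matching the hypothesis. The corresponding summand of $R^{k-1}\pi_*\mathbb Q$ on $\overline{\mathcal M}_{1,1}$ is the intermediate extension $\mathcal F$ of $\mathrm{Sym}^{k-1}\mathbb V$, and it contributes $H^1(\overline{\mathcal M}_{1,1},\mathcal F)$ to $E_2^{1,k-1}$; by Eichler--Shimura, the $(k,0)$-part of $H^1(\overline{\mathcal M}_{1,1},\mathcal F)$ is $S_{k+1}$.

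It then remains to see that this $(k,0)$-class survives to $H^k(\overline{\mathcal M}_{1,k})$, and the key point is purity. As $\overline{\mathcal M}_{1,k}$ is smooth and proper, $H^k$ carries a pure Hodge structure of weight $k$; the cusp-form Hodge structure $S_{k+1}\oplus\overline{S_{k+1}}$ is pure of weight $k$ with Hodge types $\{(k,0),(0,k)\}$, so it can appear in no other cohomological degree and cannot be cancelled by spectral-sequence differentials, which are morphisms of Hodge structures. Concretely, $E_2^{1,k-1}=E_\infty^{1,k-1}$, because all differentials touching it either originate in a column $p<0$ or land in a column $p\geq 3$, and $H^{\geq 3}$ of any constructible sheaf on the one-dimensional stack $\overline{\mathcal M}_{1,1}$ vanishes; and the only contribution to the $(k,0)$-part of $E_2^{1,k-1}$ is the term $S_{k+1}$ identified above, the remaining summands of $R^{k-1}\pi_*\mathbb Q$ being Tate or involving strictly lower symmetric powers, whose $H^1$ contributes only classes of other Hodge types. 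Hence $H^{k,0}(\overline{\mathcal M}_{1,k})\supseteq S_{k+1}$, which is nonzero under the stated hypotheses. One can instead simply invoke Getzler's computation of $H^*(\overline{\mathcal M}_{1,n})$ as a Hodge structure, where the cusp-form motive shows up in degree $k$ already for $n=k$; the purity observation is then what promotes that Euler-characteristic-level input to a genuine nonzero cohomology class. I expect the main obstacle to be exactly the analysis of $R^q\pi_*\mathbb Q$ near the cusp of $\overline{\mathcal M}_{1,1}$ --- identifying the relevant summand as the intermediate extension and ruling out interference from boundary (Eisenstein) contributions --- which is precisely what the weight and Hodge-type bookkeeping above is designed to handle.
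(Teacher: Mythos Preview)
Your proposal is correct, and in fact you already mention the shortcut the paper actually takes: the paper gives no argument and simply cites Getzler's computation of $H^*(\Mbar_{1,n})$ as a Hodge structure (together with a proposition of Canning--Larson--Payne), treating the result as well-known. Your longer sketch correctly unpacks the mechanism behind those references --- Eichler--Shimura identifies $S_{k+1}$ with the $(k,0)$-part of $H^1$ of the local system $\mathrm{Sym}^{k-1}\mathbb V$ on the modular curve, and a Leray analysis along the forgetful map embeds this in $H^{k,0}(\Mbar_{1,k})$. One technical comment: it is cleaner to run the Leray spectral sequence for the smooth proper map $\M_{1,k}^{\rt}\to\M_{1,1}$, where Deligne's degeneration theorem applies directly, than for $\pi$ over all of $\Mbar_{1,1}$; this is exactly the setup the paper uses in the analogous genus-two statement, and it sidesteps the cusp analysis you rightly flagged as the delicate point.
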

 \begin{proof}
     This is well-known; see \cite{Getzler} or \cite[Proposition 2.2]{CLPste}.
 \end{proof}

We now show that a similar result holds when $h=2$.

\begin{proposition}
 \label{prop:g=2weights}
     Let $k\geq 14$ be even. Then $H^{k+3,0}(\Mbar_{2,k})\neq 0$.
 \end{proposition}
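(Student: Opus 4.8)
Since $k+3=\dim\Mbar_{2,k}$, the group $H^{k+3,0}(\Mbar_{2,k})$ is the space of global holomorphic top-degree forms, i.e.\ $H^{0}(\Mbar_{2,k},\omega_{\Mbar_{2,k}})$, and the plan is to compute this by descending along the forgetful maps. For $\pi\colon\Mbar_{2,j}\to\Mbar_{2,j-1}$ the universal curve, the relative canonical bundle formula (valid since $\pi$ is a flat local complete intersection morphism) gives $\omega_{\Mbar_{2,j}}=\pi^{*}\omega_{\Mbar_{2,j-1}}\otimes\omega_{\pi}$, and $\pi_{*}\omega_{\pi}=\mathbb{E}$ is the rank-$2$ Hodge bundle, which on $\Mbar_{2,j-1}$ is the pullback of the Hodge bundle from $\Mbar_{2}$. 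Applying the projection formula and the identity $H^{0}(\Mbar_{2,j},-)=H^{0}(\Mbar_{2,j-1},\pi_{*}(-))$ repeatedly yields
\[
H^{k+3,0}(\Mbar_{2,k})\;\cong\;H^{0}\!\bigl(\Mbar_{2},\ \omega_{\Mbar_{2}}\otimes\mathbb{E}^{\otimes k}\bigr).
\]

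The next step is to make this concrete in terms of Siegel modular forms. Decompose $\mathbb{E}^{\otimes k}$ into $\mathrm{GL}_{2}$-irreducibles, $\mathbb{E}^{\otimes k}\cong\bigoplus_{l}\bigl(\mathrm{Sym}^{l}\mathbb{E}\otimes(\det\mathbb{E})^{(k-l)/2}\bigr)^{\oplus m_{l}}$, where $l$ runs over the integers in $[0,k]$ with $l\equiv k\pmod 2$ and $\mathrm{Sym}^{k}\mathbb{E}$ occurs with multiplicity one. Writing $\lambda=c_{1}(\det\mathbb{E})$ and using the canonical class $\omega_{\Mbar_{2}}=13\lambda-2\delta_{0}-3\delta_{1}$, each summand becomes $\mathrm{Sym}^{l}\mathbb{E}\otimes\mathcal{O}\bigl((13+\tfrac{k-l}{2})\lambda\bigr)\otimes\mathcal{O}(-2\delta_{0}-3\delta_{1})$. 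Via the Torelli morphism, which for $g=2$ identifies $\Mbar_{2}$ with the perfect-cone compactification of $\mathcal{A}_{2}$, carrying $\delta_{0}$ to the toroidal boundary and $\delta_{1}$ to the locus $\mathcal{A}_{1,1}$ of decomposable principally polarized abelian surfaces, the sections of such a summand are precisely the vector-valued Siegel modular forms for $\mathrm{Sp}_{4}(\ZZ)$ of weight $\mathrm{Sym}^{l}\det^{13+(k-l)/2}$ vanishing to order $\ge 2$ along the boundary and to order $\ge 3$ along $\mathcal{A}_{1,1}$. Thus it suffices to exhibit, for each even $k\ge 14$, one nonzero such form for some pair $(l,m)=(k-2i,\,13+i)$ with $0\le i\le k/2$.

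To construct it, I would take the Igusa cusp form $\chi_{10}$ — which vanishes to order $1$ along the toroidal boundary and to order $2$ along $\mathcal{A}_{1,1}$ — and multiply it by suitable known scalar or vector-valued Siegel cusp forms, so that the resulting form acquires the required orders of vanishing; for instance $\chi_{10}^{2}$ already vanishes to orders $2$ and $4$ along the two loci and may then be multiplied by a nonzero modular form of the complementary weight. It is here that the hypotheses enter: for even $k$ the admissible $\mathrm{Sym}$-degrees $l$ are even, which is necessary for $-I\in\mathrm{Sp}_{4}(\ZZ)$ to act trivially, and the bound $k\ge 14$ is what makes the complementary weight nonnegative. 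Non-vanishing of the relevant spaces of Siegel cusp forms in this range would be taken from Tsushima's dimension formula together with the tables of small-weight scalar and vector-valued forms. The main obstacle is the boundary bookkeeping: fixing the exact coefficients in $\omega_{\Mbar_{2}}$ and the precise vanishing orders of $\chi_{10}$ and of the auxiliary forms along $\delta_{0}$ and $\delta_{1}$, and then verifying the cusp-form non-vanishing uniformly in $k$, which should come down to a finite check for the first few even values of $k$ together with a monotonicity argument as $k$ grows. (Alternatively, one could try to extract the result from existing determinations of the mixed Hodge structures on $H^{*}(\Mbar_{2,n})$, where the Siegel cusp form contributions are recorded explicitly.)
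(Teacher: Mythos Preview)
Your approach is genuinely different from the paper's and is sound in outline, but the proposal is an outline rather than a proof, and the deferred steps are where the real content lies.

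The paper does not push down along forgetful maps.  Instead it works on $\M_{2,k}^{\rt}$, uses the Leray spectral sequence for the proper map $f\colon\M_{2,k}^{\rt}\to\M_2$ to embed $H^{3}(\M_2,R^{k}f_{*}\QQ)$ into $H^{k+3}(\M_{2,k}^{\rt})$, picks out the summand $\mathbb{V}_{k/2+3,k/2-3}$ of the local system $R^{k}f_{*}\QQ$, compares to cohomology of $\A_2$ with coefficients in this local system via a Thom--Gysin sequence, and then invokes Faltings--Chai to identify the $(k+3,0)$ part with the space $S_{6,k/2}$ of vector-valued Siegel cusp forms of type $\mathrm{Sym}^{6}\otimes\det^{k/2}$.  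Non-vanishing of $S_{6,k/2}$ for even $k\geq 16$, $k\neq 18$, is then quoted from Tsushima; the cases $k=14,18$ are handled separately using the local systems $\mathbb{V}_{7,7}$ and $\mathbb{V}_{9,9}$, whose cohomology contains copies of $S_{18}$ and $S_{22}$.  So the paper never writes down an explicit form on $\Mbar_{2,k}$ (though a remark after the proof notes that one could do so for $k=14,18$ via $\chi_{10},\chi_{12}$, exactly in the spirit of your suggestion).

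Your route via $H^{0}(\Mbar_{2},\omega_{\Mbar_{2}}\otimes\mathbb{E}^{\otimes k})$ is valid and more constructive, but the two places you flag as ``obstacles'' are precisely where the argument is incomplete.  First, the boundary bookkeeping: the identification of $\Mbar_{2}$ with a toroidal compactification of $\A_{2}$ as stacks, and the exact translation of the vanishing orders $-2\delta_{0}-3\delta_{1}$ into conditions on Siegel modular forms, must be made precise before you can say which modular form spaces you are computing.  Second, the non-vanishing is not automatic: for instance with $l=0$ the required scalar weight is $13+k/2$, which is odd when $k\equiv 0\pmod 4$, and the ring of scalar Siegel modular forms for $\mathrm{Sp}_{4}(\ZZ)$ is (apart from the multiples of $\chi_{35}$) concentrated in even weights, so you must pass to a vector-valued summand with $l>0$ in those cases.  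None of this is fatal, but until these verifications are carried out you have a plausible strategy rather than a proof.
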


In order to prove Proposition~\ref{prop:g=2weights}, we will make use of certain facts regarding the weight filtration on cohomology, which we now review.  For any variety or Deligne--Mumford stack $X$, Deligne's mixed Hodge theory \cite{Deligne71, Deligne74} implies that $H^k(X)=H^k(X,\QQ)$ admits an increasing filtration
\[0 = W_{-1}H^k(X) \subseteq W_0 H^k(X) \subseteq \cdots \subseteq W_{2k}H^k(X) = H^k(X),\]
called the  weight filtration.  When $X$ is smooth, the weights are bounded in the interval $[k,2k]$ \cite[Proposition 4.20]{PetersSteenbrink}; that is, we have
\[0 = W_{k-1}H^k(X) \subseteq W_k H^k(X) \subseteq \cdots \subseteq W_{2k}H^k(X) = H^k(X).\]
The lowest-weight component $W_kH^k(X) \subseteq H^k(X)$ is called the pure weight cohomology of $X$; it is a pure Hodge structure containing the classes of all algebraic cycles in $H^k(X)$.  When $X$ is both smooth and proper, we have $W_kH^k(X) = H^k(X)$.

More generally, if $\mathbb{V}$ is a polarizable variation of Hodge structures of weight $j$ on a smooth variety $X$, then $H^k(X,\mathbb{V})$ admits a weight filtration with weights bounded in the interval $[k+j,2k+j]$.  Furthermore, compactly-supported cohomology $H_c^k(X,\mathbb{V})$ also admits a weight filtration, whose weights are bounded in the interval $[j,k+j]$. 

Now, consider the case $X = \M_{g,n}$.  This is a smooth Deligne--Mumford stack and therefore $H^k(\M_{g,n})$ has weights in the interval $[k,2k]$.  In this case, the pure weight cohomology of $\M_{g,n}$ is related to the cohomology of the compactification $\Mbar_{g,n}$ as follows.

\begin{fact}  \label{fact:1}
The image of the restriction $H^k(\Mbar_{g,n}) \rightarrow H^k(\M_{g,n})$ is $W_kH^k(\M_{g,n})$.
\end{fact}

Indeed, for any Zariski open subset $U$ of a smooth proper Deligne--Mumford stack $X$ the restriction map $H^k(X) \rightarrow W_kH^k(U)$ is surjective \cite[Proposition 4.20]{PetersSteenbrink}. Because $\Mbar_{g,n}$ is smooth and proper, we have $W_kH^k(\Mbar_{g,n}) = H^k(\Mbar_{g,n})$, yielding Fact \ref{fact:1}.

Equipped with this background, we are prepared to prove Proposition~\ref{prop:g=2weights}.

\begin{proof}[Proof of Proposition \ref{prop:g=2weights}]
Let $\M_{2,k}^{\text{rt}}$ be the moduli space of genus-two curves with rational tails.  Explicitly, $\M_{2,k}^{\text{rt}}$ is the fiber product of the inclusion $\M_2\hookrightarrow \Mbar_{2}$ with the forgetful map $\Mbar_{2,k}\rightarrow \Mbar_2$, and thus it comes equipped with a smooth and proper forgetful map $f:\M_{2,k}^{\rt}\rightarrow \M_2$.  In particular, the Leray spectral sequence \[E_2^{p,q}=H^p(\M_2,R^qf_*\mathbb{Q})\Rightarrow H^{p+q}(\M_{2,k}^{\rt},\mathbb{Q})\]
    degenerates at $E_2$.
    
Because $\M_2$ is affine, $H^p(\M_2,R^qf_*\mathbb{Q})=0$ for $p> 3=\dim(\M_2)$. Moreover, when $q$ is odd, $H^p(\M_2,R^qf_*\mathbb{Q})=0$ because of the action of the hyperelliptic involution on the fibers of $R^qf_*\mathbb{Q}$. From the spectral sequence, we obtain an injection of mixed Hodge structures
    \begin{equation}\label{leray}
        0\rightarrow H^3(\M_2,R^{k}f_*\mathbb{Q})\rightarrow H^{k+3}(\M_{2,k}^{\rt}).
    \end{equation}
    In light of \eqref{leray}, to prove the proposition, it suffices to show that the $(k+3,0)$ part of $W_{k+3}H^3(\M_2, R^kf_*\mathbb{Q})$ is nonzero. Indeed, by the analogue of Fact~\ref{fact:1} for the rational-tails moduli space, the image of $H^{k+3}(\Mbar_{2,k}) \rightarrow H^{k+3}(\M^{\text{rt}}_{2,k})$ is precisely $W_{k+3}H^{k+3}(\M^{\text{rt}}_{2,k})$.  This restriction map is a morphism of pure Hodge structures, so the fact that the $(k+3,0)$ part of the Hodge structure $W_{k+3}H^{k+3}(\M^{\text{rt}}_{2,k})$ is nonzero implies that $H^{k+3,0}(\Mbar_{2,k}) \neq 0$.  
    
    By \cite[Propositon 4.5]{PetersenTommasi}, the local system $R^{k}f_*\mathbb{Q}$ contains a summand $\mathbb{V}_{\frac{k}{2}+3,\frac{k}{2}-3}$. Explicitly, $\mathbb{V}_{\frac{k}{2}+3,\frac{k}{2}-3}$ is the restriction to $\M_2$ of the symplectic local system on $\A_2$ (the moduli space of principally polarized abelian surfaces) corresponding to the representation of $\mathrm{Sp}_4$ of highest weight $(\frac{k}{2}+3,\frac{k}{2}-3)$. The Thom--Gysin exact sequence yields a right exact sequence of Hodge structures
    \begin{align}\label{abelianthomgysin}
     W_{k+1}H^{1}(\A_1\times \A_1,\mathbb{V}_{\frac{k}{2}+3,\frac{k}{2}-3})&\rightarrow W_{k+3}H^3(\A_2,\mathbb{V}_{\frac{k}{2}+3,\frac{k}{2}-3})\\
     \nonumber &\rightarrow W_{k+3}H^{3}(\M_2,\mathbb{V}_{\frac{k}{2}+3,\frac{k}{2}-3})\rightarrow 0,
    \end{align}
    where the first map is of Hodge type $(1,1)$.
    Let $H^3_{!}(\A_2,\mathbb{V}_{\frac{k}{2}+3,\frac{k}{2}-3})$ denote the image of compactly-supported cohomology under the natural map
    \[
    H^3_c(\A_2,\mathbb{V}_{\frac{k}{2}+3,\frac{k}{2}-3})\rightarrow H^3(\A_2,\mathbb{V}_{\frac{k}{2}+3,\frac{k}{2}-3}).
    \]
    Because the mixed Hodge structure $H_c^3(\A_2,\mathbb{V}_{\frac{k}{2}+3,\frac{k}{2}-3})$ has weights at most $k+3$ and $H^3(\A_2,\mathbb{V}_{\frac{k}{2}+3,\frac{k}{2}-3})$ has weights at least $k+3$, we have that $H^3_{!}(\A_2,\mathbb{V}_{\frac{k}{2}+3,\frac{k}{2}-3})$ is a pure Hodge structure of weight $k+3$. By \cite{FaltingsChai}, the $(k+3,0)$ part of the pure Hodge structure $H^3_{!}(\A_2,\mathbb{V}_{\frac{k}{2}+3,\frac{k}{2}-3})$ is naturally identified with $S_{6,\frac{k}{2}}$,
    the space of cusp forms for $\mathrm{Sp}_4$ of type $\mathrm{Sym}^6\otimes \det^{\otimes\frac{k}{2}}$; see also \cite[Section 3]{FPTautological}. This space is nonzero for even $k\geq 16$, $k\neq 18$ \cite{Tsushima,modularforms}. Moreover, for degree reasons, the $(k+3,0)$ part of 
    $H^3_{!}(\A_2,\mathbb{V}_{\frac{k}{2}+3,\frac{k}{2}-3})$ is not in the image of the first map in \eqref{abelianthomgysin}.   Hence, we obtain a sub-Hodge structure of $W_{k+3}H^{3}(\M_2,\mathbb{V}_{\frac{k}{2}+3,\frac{k}{2}-3})$ with $(k+3,0)$ part nonzero.  This proves the claim for all $k \neq 14, 18$.

    A similar argument shows that the $(k+3,0)$ part of the Hodge structure $W_{k+3}H^{k+3}(\M_{2,k}^{\rt})$ is nonzero for $k=14,18$, but this time using the local systems $\mathbb{V}_{7,7}$ and $\mathbb{V}_{9,9}$. By \cite[Theorem 2.1 and Remark 2.3]{Petersen}, $H^3(\A_2,\mathbb{V}_{7,7})$ contains a copy of the space $S_{18}$ and $H^3(\A_2,\mathbb{V}_{9,9})$ contains a copy of the space $S_{22}$, where $S_\ell$ is the pure Hodge structure corresponding to cusp forms for $\mathrm{SL}_2$ of weight $\ell$. The $(\ell-1,0)$ parts of these pure Hodge structures are nonzero when $\ell=18, 22$, so the claim is proved in these remaining cases.     
\end{proof}

\begin{remark}
    An alternative way to handle the $k=14,18$ cases of Proposition~\ref{prop:g=2weights} is to construct a holomorphic $(k+3)$-form on $\Mbar_{2,k}$ explicitly.  For $k=14$, this was carried out in \cite[Section 3.5]{FPTautological} by using the Siegel cusp form $\chi_{10}$, and working similarly with $\chi_{12}$ allows one to handle the case $k=18$.
\end{remark}

\section{Non-tautological double cover cycles}
\label{sec:mainresults}

In this section, we apply Proposition~\ref{prop:g=hlemma6} to prove Theorem~\ref{thm:interior}: namely, that $[\B_{g \rightarrow h, 0, 2m}]$ is non-tautological under a certain assumption on the existence of holomorphic forms on a moduli space of stable curves. By combining this result with the genus-one and genus-two cases in which such forms were shown to exist (Propositions~\ref{prop:g=1weights} and \ref{prop:g=2weights}), we deduce Theorem~\ref{thm:main}.

\subsection{Non-tautological double cover cycles on the interior}

First, let us recall the statement of Theorem~\ref{thm:interior}.

\begin{customthm}{B}
Let $g\geq 2h$ and set $k := g-2h+m+1$.  Suppose that
\[H^{3h-3+k,0}(\Mbar_{h,k})\neq 0.\]
If $m \geq 1$ or $g \geq 4h+2$, then
     \[[\B_{g\rightarrow h,0,2m}]\in H^{6h-6+2k}(\M_{g,2m})
    \]
    is non-tautological.
\end{customthm}

\begin{proof}
Suppose, toward a contradiction, that $[\B_{g\rightarrow h,0,2m}]$ is tautological.  Given that tautological classes on $\M_{g,2m}$ are, by definition, the restrictions of tautological classes on $\Mbar_{g,2m}$, it follows from excision that
\begin{equation}
    \label{eq:excision}
    [\Bbar_{g \rightarrow h, 0,2m}] = T+ B
\end{equation}
where $T$ is a tautological class and $B$ is an algebraic cycle pushed forward from the boundary $\d\Mbar_{g,2m}$. Pulling back both sides of \eqref{eq:excision} under $i: \Mbar_{h,k} \times \Mbar_{h,k} \rightarrow \Mbar_{g,2m}$ and applying the Hodge--K\"unneth decomposition, we obtain an element of
\[H^{3h-3+k,3h-3+k}\left(\Mbar_{h,k} \times \Mbar_{h,k}\right) \cong \bigoplus_{a+c=b+d=3h-3+k}H^{a,b}(\Mbar_{h,k}) \otimes H^{c,d}(\Mbar_{h,k}).\]
We claim that
\begin{enumerate}[label=(\alph*)]
    \item $i^*[\Bbar_{g \rightarrow h, 0,2m}]$ has a contribution in $H^{3h-3+k,0}(\Mbar_{h,k}) \otimes H^{0,3h-3+k}(\Mbar_{h,k})$;
        \item $i^*T$ has no contribution in $H^{3h-3+k,0}(\Mbar_{h,k}) \otimes H^{0,3h-3+k}(\Mbar_{h,k})$;
    \item $i^*B$ has no contribution in $H^{3h-3+k,0}(\Mbar_{h,k}) \otimes H^{0,3h-3+k}(\Mbar_{h,k})$.
\end{enumerate}
These three statements together contradict the equation
\[i^*[\Bbar_{g \rightarrow h, 0,2m}] = i^*T + i^*B,\]
so proving them will complete the proof.

To prove (a), note that by Proposition \ref{prop:g=hlemma6}, we have $i^*[\Bbar_{g \rightarrow h, 0, 2m}] = \alpha [\Delta] + B'$ for some $\alpha \in \QQ_{> 0}$ and some algebraic cycle $B'$ supported on $\partial(\M_{h,k} \times \M_{h,k})$.  The assumption that $H^{3h-3+k,0}(\Mbar_{h,k}) \neq 0$ (hence $H^{0,3h-3+k}(\Mbar_{h,k}) \neq 0$, as well) implies that there is a nonzero contribution to the Hodge--K\"unneth decomposition of $[\Delta]$ in the component $H^{3h-3+k,0}(\Mbar_{h,k}) \otimes H^{0,3h-3+k}(\Mbar_{h,k})$. The class $B'$ restricts trivially to the interior, and so it has no contribution in this component because any nonzero class in $H^{3h-3+k,0}(\Mbar_{h,k}) \otimes H^{0,3h-3+k}(\Mbar_{h,k})$ restricts nontrivially to the interior by Fact \ref{fact:2}. Therefore,
$\alpha [\Delta ] + B'$ has a nonzero contribution in $H^{3h-3+k,0}(\Mbar_{h,k}) \otimes H^{0,3h-3+k}(\Mbar_{h,k})$.

To prove (b), note that because $T$ is tautological, every component of the K\"unneth decomposition of $i^*T$ is tautological and therefore algebraic
 by \cite[Proposition 1]{GP}. Thus, the only contributions to $i^*T$ under the Hodge--K\"unneth decomposition can be in components of the form $H^{p,p}(\Mbar_{h,k}) \otimes H^{q,q}(\Mbar_{h,k})$.

What remains, then, is to prove (c).  To do so, we first note that $B$ can be assumed algebraic: indeed, both $T$ and $[\Bbar_{g \rightarrow h, 0,2m}]$ are algebraic, and the excision exact sequence can be applied in the Chow ring to obtain a representative of $B$ in Chow. Since $B$ is pushed forward from $\d\Mbar_{g,2m}$, it is a sum of classes of irreducible subvarieties, each supported on a boundary divisor of $\Mbar_{g,2m}$.  By addressing each of these classes separately, it suffices to prove (c) in the case that $B$ is supported on a single boundary divisor $D$.  This boundary divisor is the image of a map
\[i_2: \Mbar' \rightarrow \Mbar_{g,2m},\]
where $\Mbar'$ is either $\Mbar_{g-1,2m+1}$ or a product $\Mbar_{g_1,n_1+1} \times \Mbar_{g_2,n_2+1}$.  If $B$ is supported on $D$, then $B = i_{2*}(B'')$ for a class $B'' \in H^*(\Mbar')$.  From here, we consider two cases, depending on whether $D$ contains the image of $i$ or not.

First, suppose that $D$ contains the image of $i$.  (This happens, for instance if $D = D_{\text{irr}}$ is the closure of the locus of curves with a non-separating node and $g > 2h$ so that $i$ imposes at least two nodes.)  Then, similarly to the proof of \cite[Theorem 2]{vZ}, we can factor $i$ as
\[\Mbar_{h,k} \times \Mbar_{h,k} \xrightarrow{i_1} \Mbar' \xrightarrow{i_2} \Mbar_{g,2m}.\]
Thus, 
 \[i^*(B)=i_1^*i_2^*(B)=i_1^*i_2^*i_{2*}(B'')=i_1^*(c_1(N))\cdot i_1^* B''\]
by the projection formula, where $N$ is the normal bundle to the gluing map $i_2$.  Since $i_1^*(c_1(N)) \in H^2(\Mbar_{h,k} \times \Mbar_{h,k})$ is algebraic, its contribution to the Hodge--K\"unneth decomposition can only be in components of the form $H^{p,p}(\Mbar_{h,k}) \otimes H^{q,q}(\Mbar_{h,k})$ in which $2p + 2q = 2$ and hence at least one of $p$ or $q$ is nonzero.  Because the decomposition respects products, it is therefore impossible for $i_1^*(c_1(N))\cdot i_1^* B''$ to have a contribution in the component $H^{3h-3+k,0}(\Mbar_{h,k}) \otimes H^{0,3h-3+k}(\Mbar_{h,k})$.

Finally, suppose that $D$ does not contain the image of $i$.  (This happens, for instance, if $g> 2h$ and $D \neq D_{\text{irr}}$.)  Then we claim that $i^*B$ is pushed forward from $\d(\M_{h,k}\times \M_{h,k})$.  To prove this, suppose instead that $i^*B$ has nonempty intersection with $\M_{h,k} \times \M_{h,k}$. The intersection describes pairs of smooth curves $(C_1, C_2)$ whose images under $i$ land in $B$, which is contained in $D$ by assumption. For each pair of smooth curves $(C_1,C_2)$, the resulting curve $C\colonequals i(C_1,C_2)$ only has nodes arising from the pairwise gluing of the $k-m$ points, so the boundary divisors containing $C$ are fully determined by the value of $k-m$. Since $i(\M_{h,k} \times \M_{h,k})$ intersects $D$ nontrivially, it follows that $i(\M_{h,k} \times \M_{h,k})$ must lie in $D$, and by continuity, the entire image of $i$ must also lie in $D$. This is a contradiction, and so $i^*B$ is pushed forward from $\partial(\M_{h,k} \times \M_{h,k})$.

Thus, $i^*B$ maps to zero under the restriction to $H^{6h-6+2k}(\M_{h,k}\times \M_{h,k})$. Any nonzero class in $H^{3h-3+k,0}(\Mbar_{h,k})\otimes H^{0,3h-3+k}(\Mbar_{h,k})$ restricts nontrivially to the interior by Fact~\ref{fact:2}, so $i^*B$ can have no nonzero contribution in this Hodge--K\"unneth component.  This proves (c) and therefore finishes the proof.
\end{proof}

We apply Theorem~\ref{thm:interior} in the case where $h=1$ or $h=2$, in which case the values of $k$ for which the condition $H^{3h-3+k,0}(\Mbar_{h,k}) \neq 0$ of Theorem~\ref{thm:interior} is satisfied are described by Propositions \ref{prop:g=1weights} and \ref{prop:g=2weights}.

\begin{corollary}
    \label{cor:HneqRHopen}
    If $g \geq 2$ and $m \geq 0$ are such that $g + m$ is even and either $g+m=12$ or $g+m \geq 16$, then the class
    \[[\B_{g\rightarrow 1,0,2m}] \in H^{2g+2m-2}(\M_{g,2m})\] 
    is non-tautological.  If $g \geq 4$ and $m \geq 0$ are such that $g+m$ is odd and $g + m \geq 17$, then the class
    \[[\B_{g\rightarrow 2,0,2m}] \in H^{2g+2m}(\M_{g,2m})\]
    is non-tautological.
\end{corollary}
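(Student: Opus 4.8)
The plan is to derive both assertions as direct specializations of Theorem~\ref{thm:interior}: the first by taking $h=1$, the second by taking $h=2$. In each case the only work is to check that the numerical hypotheses of Theorem~\ref{thm:interior} hold on the stated ranges, and to invoke Proposition~\ref{prop:g=1weights} or Proposition~\ref{prop:g=2weights} to verify the hypothesis $H^{3h-3+k,0}(\Mbar_{h,k}) \neq 0$.

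For the genus-one statement I would set $h=1$, so that $k = g-2h+m+1 = g+m-1$ and the target degree $6h-6+2k = 2k = 2g+2m-2$ agrees with the claim. Theorem~\ref{thm:interior} requires three things: that $g \geq 2h = 2$, which is hypothesized; the auxiliary condition ``$m \geq 1$ or $g \geq 4h+2 = 6$'', which holds automatically because if $m=0$ then $g = g+m \in \{12\} \cup \{16, 17, \ldots\}$, so $g \geq 12 > 6$; and $H^{3h-3+k,0}(\Mbar_{h,k}) = H^{k,0}(\Mbar_{1,k}) \neq 0$. For the last point I would note that, with $k = g+m-1$, the integer $k$ is odd exactly when $g+m$ is even, satisfies $k \geq 11$ exactly when $g+m \geq 12$, and equals $13$ exactly when $g+m = 14$; thus the corollary's hypothesis that $g+m$ is even with $g+m = 12$ or $g+m \geq 16$ is precisely the hypothesis ``$k \geq 11$ odd, $k \neq 13$'' of Proposition~\ref{prop:g=1weights}.

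For the genus-two statement I would instead set $h=2$, so that $k = g+m-3$ and the target degree $6h-6+2k = 6+2k = 2g+2m$ matches. Here $g \geq 2h = 4$ is hypothesized; the condition ``$m \geq 1$ or $g \geq 4h+2 = 10$'' is automatic, since $m = 0$ forces $g = g+m \geq 17 > 10$; and the remaining hypothesis $H^{3h-3+k,0}(\Mbar_{h,k}) = H^{k+3,0}(\Mbar_{2,k}) \neq 0$ is supplied by Proposition~\ref{prop:g=2weights}, whose hypothesis ``$k \geq 14$ even'' becomes, under $k = g+m-3$, exactly ``$g+m \geq 17$ and $g+m$ odd''.

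I do not expect any genuine obstacle here: all of the substance has already been placed in Theorem~\ref{thm:interior} together with Propositions~\ref{prop:g=1weights} and~\ref{prop:g=2weights}, so this final step is purely bookkeeping. The only points requiring care are translating the parametrization $k = g-2h+m+1$ back into conditions on $g+m$, and confirming---as above---that the auxiliary inequality ``$m \geq 1$ or $g \geq 4h+2$'' is never violated throughout the stated ranges.
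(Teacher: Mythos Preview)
Your proposal is correct and matches the paper's approach exactly: the corollary is deduced by specializing Theorem~\ref{thm:interior} to $h=1$ and $h=2$ and invoking Propositions~\ref{prop:g=1weights} and~\ref{prop:g=2weights} to supply the non-vanishing hypothesis. Your bookkeeping on the translation between $k$ and $g+m$, and your verification of the auxiliary condition ``$m\geq 1$ or $g\geq 4h+2$'', are all accurate and in fact more explicit than what the paper writes out.
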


In particular, Corollary~\ref{cor:HneqRHopen} exhibits non-tautological classes on $\M_{g,2m}$ whenever $g \geq 4$ and $m \geq 0$ satisfy either $g+m=12$ or $g+m\geq 16$, and similarly for $g=2$ and $g=3$ as long as $g+m$ is even.  Thus, the proof of Theorem~\ref{thm:main} is immediate.

This line of reasoning also yields new examples of non-tautological classes on the compact moduli space: the locus of double covers of genus-two curves, extending results of Lian \cite{Lian}.

\begin{corollary}
\label{cor:g=2}
Let $g \geq 4$, and let $m \geq 0$ be such that $g+m$ is odd and $g+m \geq 17$.
Then the class
\[[\Bbar_{g\rightarrow 2,n,2m}] \in H^{2(g+m)}(\Mbar_{g,n+2m})\]
is non-tautological for all $ 0 \leq n \leq 2g-6$.
\end{corollary}
\begin{proof}
The $n=0$ case follows from Corollary~\ref{cor:HneqRHopen}. From here, one can use exactly the same argument as in \cite[Lemma 8]{vZ} to show that if $[\Bbar_{g \rightarrow 2, n, 2m}]$ is non-tautological for some $n$, then $[\Bbar_{g \rightarrow 2, n+1, 2m}]$ is also non-tautological; the key point is that the pushforward of $[\Bbar_{g\rightarrow h, n+1,2m}]$ under the forgetful morphism $\Mbar_{g,n+1+2m} \rightarrow \Mbar_{g,n+2m}$ is a scalar multiple of $[\Bbar_{g \rightarrow h,n, 2m}]$ by dimension considerations.  Thus, the corollary follows for $n \leq 2g-6$, which, by Remark~\ref{rem:r}, is the maximal number of ramification points.
\end{proof}

\subsection{Non-tautological classes on curves with rational tails}

We conclude the paper by remarking that, although Theorem~\ref{thm:main} only furnishes non-tautological classes on $\M_{g,n}$ when $n$ is even, it is possible to use it to produce non-tautological classes with odd numbers of marked points by expanding to the larger moduli space $\M^{\text{rt}}_{g,n}$ of curves with rational tails.  To do so, we denote by $\B^{\text{rt}}_{g \rightarrow h, n, 2m}$ the restriction of $\Bbar_{g \rightarrow h, n, 2m}$ to $\M^{\text{rt}}_{g,n+2m}$, and we recall that the tautological ring of $\M^{\text{rt}}_{g,n+2m}$ is defined as the image of $RH^*(\Mbar_{g,n+2m})$ under restriction.  Then we have the following result.

\begin{proposition}
\label{prop:rationaltails}
Let $1\leq n\leq 2g-4h+2$.
If $[\B^{\mathrm{rt}}_{g \rightarrow h, n-1, 2m}]$ is non-tautological, then $[\B^{\mathrm{rt}}_{g\rightarrow h, n, 2m}]$ is non-tautological.
\end{proposition}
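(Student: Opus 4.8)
The plan is to run the rational-tails analogue of the argument in \cite[Lemma 8]{vZ} --- the same argument that was invoked on $\Mbar_{g,n}$ in the proof of Corollary~\ref{cor:g=2}. Let $\pi\colon\M^{\mathrm{rt}}_{g,n+2m}\to\M^{\mathrm{rt}}_{g,n-1+2m}$ be the morphism that forgets the $n$th marked point and stabilizes. Since $n\leq 2g-4h+2=r$ by Remark~\ref{rem:r}, this marked point is one of the $r$ ramification points in the description of $\Admbar(g,h)_{2m}$, so that $\pi\circ\phi_n$ agrees with $\phi_{n-1}$ (up to stabilization), and hence $\pi$ carries $\B^{\mathrm{rt}}_{g\to h,n,2m}$ onto $\B^{\mathrm{rt}}_{g\to h,n-1,2m}$. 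The first step is to record this surjectivity, which is just the restriction to the open substacks $\M^{\mathrm{rt}}\hookrightarrow\Mbar$ of the corresponding statement for the $\Bbar$'s.

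The key numerical input is that the dimension $2g+m-h-1$ of $\Bbar_{g\to h,n,2m}$ computed in Section~\ref{sec:preliminaries} is \emph{independent of $n$}: once an admissible cover is fixed, its ramification points are determined, so remembering one more of them adds no moduli. Passing to the open substack of curves with rational tails does not change dimensions, so $\pi$ restricts to a surjective, generically finite morphism $\B^{\mathrm{rt}}_{g\to h,n,2m}\to\B^{\mathrm{rt}}_{g\to h,n-1,2m}$. Consequently,
\[
\pi_*[\B^{\mathrm{rt}}_{g\to h,n,2m}]=d\,[\B^{\mathrm{rt}}_{g\to h,n-1,2m}]
\]
for some $d\in\QQ_{>0}$, namely the generic degree, exactly as in \cite[Lemma 8]{vZ}.

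Finally I would argue by contraposition. Suppose $[\B^{\mathrm{rt}}_{g\to h,n,2m}]$ is tautological, i.e., the restriction to $\M^{\mathrm{rt}}_{g,n+2m}$ of a class in $RH^*(\Mbar_{g,n+2m})$. The square relating the forgetful maps on $\Mbar$ and on $\M^{\mathrm{rt}}$ is Cartesian with the horizontal legs the open immersions $\M^{\mathrm{rt}}\hookrightarrow\Mbar$ (both rational-tails spaces being the preimage of $\M_g$), so pushforward along the proper map $\Mbar_{g,n+2m}\to\Mbar_{g,n-1+2m}$ commutes with these restrictions; since $RH^*(\Mbar)$ is closed under forgetful pushforwards and $RH^*(\M^{\mathrm{rt}})$ is defined by restriction, it follows that $\pi_*[\B^{\mathrm{rt}}_{g\to h,n,2m}]$ is tautological. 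Dividing by $d\neq 0$, the class $[\B^{\mathrm{rt}}_{g\to h,n-1,2m}]$ is tautological, contradicting the hypothesis. Hence $[\B^{\mathrm{rt}}_{g\to h,n,2m}]$ is non-tautological.

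The only mildly delicate point is the second step: checking that $\pi$ genuinely restricts to a generically finite map on $\B^{\mathrm{rt}}$ --- equivalently, that $d\neq 0$ --- which rests on the $n$-independence of the dimension and is handled just as in \cite{vZ}. Everything else is bookkeeping with the open immersion $\M^{\mathrm{rt}}\hookrightarrow\Mbar$ and the definition of the tautological ring on $\M^{\mathrm{rt}}$ via restriction.
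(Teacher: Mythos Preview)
Your argument is correct and follows essentially the same approach as the paper: show that $\pi_*[\B^{\mathrm{rt}}_{g\to h,n,2m}]$ is a nonzero multiple of $[\B^{\mathrm{rt}}_{g\to h,n-1,2m}]$ and then use that forgetful pushforward preserves tautological classes. The paper's proof is terser---it simply asserts $\pi$ is proper and that $\pi_*$ sends one class to the other---whereas you supply the dimension count and the base-change justification for why $\pi_*$ preserves $RH^*(\M^{\mathrm{rt}})$, but the strategy is identical.
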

\begin{proof}
    The forgetful map $\pi:\M_{g,n+2m}^{\rt}\rightarrow \M_{g,n-1+2m}^{\rt}$ is proper and \[
    \pi_*[\B^{\mathrm{rt}}_{g\rightarrow h, n, 2m}]=[\B^{\mathrm{rt}}_{g\rightarrow h, n-1, 2m}].
    \]
    The result follows because tautological classes push forward to tautological classes under forgetful maps.
\end{proof}

Note that if $\B_{g\rightarrow h, n, 2m}$ is non-tautological, then so is $\B^{\mathrm{rt}}_{g\rightarrow h, n, 2m}$. Thus, combining Proposition~\ref{prop:rationaltails} with Corollary~\ref{cor:HneqRHopen}, we find that there exist non-tautological algebraic cycles in $H^*(\M^{\text{rt}}_{g,n})$ whenever $g \geq 2$ and either $2g+n \in \{24,25\}$ or $2g+n \geq 32$.  Because all algebraic classes on $\Mbar_{0,n}$ and $\Mbar_{1,n}$ are already known to be tautological, this settles the question of the existence of non-tautological algebraic classes in $H^*(\M^{\text{rt}}_{g,n})$ for all but finitely many values of $g$ and $n$.
 
\bibliographystyle{alpha}
\bibliography{bibliography.bib}

\begin{thebibliography}{BCFvdG17}

\bibitem[ACV03]{ACV}
Dan Abramovich, Alessio Corti, and Angelo Vistoli.
\newblock Twisted bundles and admissible covers.
\newblock {\em Comm. Algebra}, 31(8):3547--3618, 2003.
\newblock Special issue in honor of Steven L. Kleiman.

\bibitem[BCFvdG17]{modularforms}
Jonas Bergström, Fabien Cléry, Carel Faber, and Gerard van~der Geer.
\newblock Siegel modular forms of degree two and three.
\newblock \url{http://smf.compositio.nl}, 2017.
\newblock Retrieved 17 Dec 2023.

\bibitem[CL22a]{CLbielliptic}
Samir Canning and Hannah Larson.
\newblock The bielliptic locus in genus 11.
\newblock {\em To appear in Michigan Math. J., arXiv:2209.09715}, 2022.

\bibitem[CL22b]{CLstable}
Samir Canning and Hannah Larson.
\newblock On the {C}how and cohomology rings of moduli spaces of stable curves.
\newblock {\em arXiv:2208.02357}, 2022.

\bibitem[CL23]{CL789}
Samir Canning and Hannah Larson.
\newblock The {C}how rings of the moduli spaces of curves of genus 7, 8, and 9.
\newblock {\em To appear in J. Algebraic Geom., arXiv:2104.05820}, 2023.

\bibitem[CLP23]{CLPste}
Samir Canning, Hannah Larson, and Sam Payne.
\newblock Extensions of tautological rings and motivic structures in the
  cohomology of {$\overline{\mathcal{M}}_{g,n}$}.
\newblock {\em arXiv:2307.08830}, 2023.

\bibitem[Del71]{Deligne71}
Pierre Deligne.
\newblock Th{\'e}orie de hodge: {II}.
\newblock {\em Publications Math{\'e}matiques de l'IH{\'E}S}, 40:5--57, 1971.

\bibitem[Del74]{Deligne74}
Pierre Deligne.
\newblock Th{\'e}orie de hodge : {III}.
\newblock {\em Publications Mathématiques de l'IHÉS}, 44:5--77, 1974.

\bibitem[Fab90a]{FaberI}
Carel Faber.
\newblock Chow rings of moduli spaces of curves. {I}. {T}he {C}how ring of
  {$\overline{\mathcal{M}}_3$}.
\newblock {\em Ann. of Math. (2)}, 132(2):331--419, 1990.

\bibitem[Fab90b]{FaberII}
Carel Faber.
\newblock Chow rings of moduli spaces of curves. {II}. {S}ome results on the
  {C}how ring of {$\overline{\mathcal{M}}_4$}.
\newblock {\em Ann. of Math. (2)}, 132(3):421--449, 1990.

\bibitem[FC90]{FaltingsChai}
Gerd Faltings and Ching-Li Chai.
\newblock {\em Degeneration of abelian varieties}, volume~22 of {\em Ergebnisse
  der Mathematik und ihrer Grenzgebiete (3) [Results in Mathematics and Related
  Areas (3)]}.
\newblock Springer-Verlag, Berlin, 1990.
\newblock With an appendix by David Mumford.

\bibitem[FP13]{FPTautological}
Carel Faber and Rahul Pandharipande.
\newblock Tautological and non-tautological cohomology of the moduli space of
  curves.
\newblock In {\em Handbook of moduli. {V}ol. {I}}, volume~24 of {\em Adv. Lect.
  Math. (ALM)}, pages 293--330. Int. Press, Somerville, MA, 2013.

\bibitem[Get98]{Getzler}
Ezra Getzler.
\newblock The semi-classical approximation for modular operads.
\newblock {\em Comm. Math. Phys.}, 194(2):481--492, 1998.

\bibitem[GP03]{GP}
Tom Graber and Rahul Pandharipande.
\newblock Constructions of nontautological classes on moduli spaces of curves.
\newblock {\em Michigan Math. J.}, 51(1):93--109, 2003.

\bibitem[Iza95]{Izadi}
Elham Izadi.
\newblock The {C}how ring of the moduli space of curves of genus {$5$}.
\newblock In {\em The moduli space of curves ({T}exel {I}sland, 1994)}, volume
  129 of {\em Progr. Math.}, pages 267--304. Birkh\"{a}user Boston, Boston, MA,
  1995.

\bibitem[Kee92]{Keel}
Sean Keel.
\newblock Intersection theory of moduli space of stable {$n$}-pointed curves of
  genus zero.
\newblock {\em Trans. Amer. Math. Soc.}, 330(2):545--574, 1992.

\bibitem[Lia22]{Lian}
Carl Lian.
\newblock Non-tautological {H}urwitz cycles.
\newblock {\em Math. Z.}, 301(1):173--198, 2022.

\bibitem[Mum83]{Mumford}
David Mumford.
\newblock Towards an enumerative geometry of the moduli space of curves.
\newblock In {\em Arithmetic and geometry, {V}ol. {II}}, volume~36 of {\em
  Progr. Math.}, pages 271--328. Birkh\"{a}user Boston, Boston, MA, 1983.

\bibitem[Pet14]{Petersengenus1}
Dan Petersen.
\newblock The structure of the tautological ring in genus one.
\newblock {\em Duke Math. J.}, 163(4):777--793, 2014.

\bibitem[Pet15]{Petersen}
Dan Petersen.
\newblock Cohomology of local systems on the moduli of principally polarized
  abelian surfaces.
\newblock {\em Pacific J. Math.}, 275(1):39--61, 2015.

\bibitem[PS08]{PetersSteenbrink}
Chris A.~M. Peters and Joseph H.~M. Steenbrink.
\newblock {\em Mixed {H}odge structures}, volume~52 of {\em Ergebnisse der
  Mathematik und ihrer Grenzgebiete. 3. Folge. A Series of Modern Surveys in
  Mathematics [Results in Mathematics and Related Areas. 3rd Series. A Series
  of Modern Surveys in Mathematics]}.
\newblock Springer-Verlag, Berlin, 2008.

\bibitem[PT14]{PetersenTommasi}
Dan Petersen and Orsola Tommasi.
\newblock The {G}orenstein conjecture fails for the tautological ring of
  {$\overline{\mathcal M}_{2,n}$}.
\newblock {\em Invent. Math.}, 196(1):139--161, 2014.

\bibitem[PV15]{PV}
Nikola Penev and Ravi Vakil.
\newblock The {C}how ring of the moduli space of curves of genus six.
\newblock {\em Algebr. Geom.}, 2(1):123--136, 2015.

\bibitem[SvZ20]{SvZ}
Johannes Schmitt and Jason van Zelm.
\newblock Intersections of loci of admissible covers with tautological classes.
\newblock {\em Selecta Math. (N.S.)}, 26(5):Paper No. 79, 69, 2020.

\bibitem[Tsu82]{Tsushima}
Ryuji Tsushima.
\newblock On the spaces of {S}iegel cusp forms of degree two.
\newblock {\em Amer. J. Math.}, 104(4):843--885, 1982.

\bibitem[vZ18]{vZ}
Jason van Zelm.
\newblock Nontautological bielliptic cycles.
\newblock {\em Pacific J. Math.}, 294(2):495--504, 2018.

\end{thebibliography}
	
\end{document}